\newcommand{\X}{\mathcal{X}}
\newcommand{\C}{\mathbb{C}}
\newcommand{\Q}{\mathcal{Q}}
\newcommand{\R}{\mathbb{R}}
\newcommand{\E}{\mathbb{E}}
\newcommand{\N}{\mathbb{N}}
\newcommand{\Z}{\mathbb{Z}}
\newcommand{\PP}{{\mathbb P}}
\newcommand{\vf}{{\vec q}}
\DeclareMathOperator{\vdc}{-vdC}
\newcommand{\norm}[1]{\left\Vert #1\right\Vert}
\theoremstyle{plain}
\newtheorem{theorem}{Theorem}[section]
\newtheorem{lemma}[theorem]{Lemma}
\newtheorem{proposition}[theorem]{Proposition}
\newtheorem*{theoremA'}{Theorem A'}
\newtheorem*{theoremB'}{Theorem B'}
\newtheorem*{theoremC'}{Theorem C'}
\newtheorem*{theorem*}{Theorem}
\newtheorem*{Correspondence1}{Furstenberg Correspondence Principle}
\newtheorem{corollary}[theorem]{Corollary}
\theoremstyle{definition}
\newtheorem*{example*}{Example}
\theoremstyle{remark}
\begin{document}
\begin{abstract}
If $\vf_1, \ldots, \vf_m\colon\Z\to\Z^\ell$ are polynomials with
zero constant terms and $E\subset\Z^\ell$ has positive upper Banach
density, then we show that the set
$E\cap (E-\vf_1(p-1))\cap\ldots\cap (E-\vf_m(p-1))$
is nonempty for some prime $p$.  We also prove mean convergence
for the associated averages along the prime numbers, conditional
to analogous convergence results along the full integers.  This
generalizes earlier results of the authors, of Wooley and
Ziegler, and of Bergelson, Leibman and Ziegler.
\end{abstract}

\title{The polynomial multidimensional Szemer\'edi Theorem along
shifted primes}
\thanks{The first author was partially supported by Marie Curie IRG 248008,
the second author by the Institut Universitaire de France,
and the third by NSF grant 0900873.}

\subjclass[2000]{Primary: 11B30; Secondary: 37A45,  28D05, 05D10. }

\keywords{Arithmetic progressions,  higher degree uniformity,   multiple recurrence.}

\author{Nikos Frantzikinakis}
\address{University of Crete,
Department of mathematics, Knossos Avenue, Heraklion 71409, Greece}
\email{frantzikinakis@gmail.com}

\author{Bernard Host}
\address{Laboratoire d'analyse et de
math\'ematiques appliqu\'{e}es, Universit\'e
de Marne la Vall\'ee \& CNRS UMR 8050\\
5 Bd. Descartes, Champs sur Marne\\
77454 Marne la Vall\'ee Cedex 2, France}
\email{bernard.host@univ-mlv.fr}

\author{Bryna Kra}
\address{Department of Mathematics,
Northwestern University \\ 2033 Sheridan Road Evanston \\ IL 60208-2730, USA}
\email{kra@math.northwestern.edu}

\maketitle

\section{Introduction}
\subsection{Background and new results}
Recent advances in ergodic theory and number theory have lead to
numerous results
on patterns in subsets of the integers with positive upper density,
with descriptions of possible restrictions on differences
between successive terms.
In this vein, we show that the parameters in the
polynomial multidimensional Szemer\'edi Theorem
of Bergelson and Leibman~\cite{BL}
can be restricted to the shifted primes.

Let $\PP$ denote the set of prime numbers and
define the {\em
upper Banach density} $d^*(E)$ of a set $E\subset\Z^\ell$
as $d^*(E) = \limsup_{|I|\to\infty}\frac{|E\cap I|}{|I|}$,
where the $\limsup$ is taken over all parallelepipeds $I\subset\Z^\ell$
whose side lengths tend to infinity.

\begin{theorem}
\label{th:main-comb}
Let $\ell,m\in \N$, $\vf_1, \ldots, \vf_m\colon\Z\to\Z^\ell$ be polynomials with $\vf_i(0)=\vec 0$ for
$i=1, \ldots, m$, and let
$E\subset\Z^\ell$ with upper Banach density
$d^*(E) > 0$.  Then the set of integers $n$ such that
$$
 d^*\bigl(E\cap (E-\vf_1(n))\cap\ldots\cap (E-\vf_m(n))
\bigr)>0
$$
has nonempty intersection with $\PP-1$ and $\PP+1$.
\end{theorem}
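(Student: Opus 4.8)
The plan is to pass, via the Furstenberg correspondence principle, from Theorem~\ref{th:main-comb} to a statement about polynomial multiple ergodic averages, then to remove the von Mangoldt weight from such averages by combining the Green--Tao--Ziegler uniformity estimates for the primes with a van der Corput/PET argument showing that Gowers-uniform weights do not affect polynomial multiple averages, and finally to apply the Bergelson--Leibman theorem to the resulting \emph{unweighted} averages. Crucially, only a positive lower bound for an average is needed rather than its convergence, which is why this part of the argument (unlike the convergence results) is unconditional. We treat $\PP-1$; the case of $\PP+1$ is identical, restricting instead to primes $p\equiv-1$ modulo the auxiliary modulus.

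\textbf{Steps 1--2 (correspondence and reformulation).} By the Furstenberg correspondence principle there are a probability space $(X,\X,\mu)$, commuting invertible measure preserving transformations $T_1,\dots,T_\ell$ (so $T^{\vf(n)}$ makes sense for a polynomial $\vf\colon\Z\to\Z^\ell$), and $A\in\X$ with $\mu(A)=d^*(E)>0$, such that $d^*\bigl(E\cap(E-\vf_1(n))\cap\dots\cap(E-\vf_m(n))\bigr)\ge\mu\bigl(A\cap T^{-\vf_1(n)}A\cap\dots\cap T^{-\vf_m(n)}A\bigr)$ for all $n$. Fix $W$ a product of small primes, restrict to primes $p\equiv1\pmod W$, and put $p-1=Wk$, so that $\psi_i(k):=\vf_i(Wk)$ is again a polynomial with zero constant term. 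With $F(k):=\mu\bigl(A\cap\bigcap_{i=1}^m T^{-\psi_i(k)}A\bigr)\in[0,1]$ and $\lambda_W(k):=\frac{\phi(W)}{W}\Lambda(Wk+1)$ (whose mean tends to $1$), it suffices to prove that, for some such $W$,
$$\liminf_{M\to\infty}\ \frac1M\sum_{k=1}^M \lambda_W(k)\,F(k)\ >\ 0,$$
since any $k$ with $\lambda_W(k)F(k)>0$ furnishes a prime $p=Wk+1$ with $\mu\bigl(A\cap\bigcap_i T^{-\vf_i(p-1)}A\bigr)>0$, hence $d^*\bigl(E\cap\bigcap_i(E-\vf_i(p-1))\bigr)>0$.

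\textbf{Step 3 (removing the weight and concluding).} Write $\lambda_W=1+(\lambda_W-1)$. The contribution of $1$ is $\frac1M\sum_{k\le M}F(k)$, the unweighted polynomial multiple average of the zero-constant-term family $\{\psi_i\}$, whose $\liminf$ is positive, say $\ge c_W>0$, by the Bergelson--Leibman theorem. For the contribution of $\lambda_W-1$, a van der Corput/PET induction of Bergelson--Leibman type (in the spirit of the characteristic-factor analyses of Host--Kra and Leibman) yields an integer $s=s(\vf_1,\dots,\vf_m)$ and a constant $C$ such that
$$\Bigl\|\ \frac1M\sum_{k\le M}a(k)\prod_{i=1}^m T^{\psi_i(k)}\mathbf{1}_A\ \Bigr\|_{L^2(\mu)}\ \le\ C\,\|a\|_{U^s[M]}+o_M(1)$$
for every sequence $a$ with $\|a\|_\infty\le1$; taking the inner product with $\mathbf{1}_A$, the same bound controls the $(\lambda_W-1)$-contribution to the weighted average. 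Since the Green--Tao--Ziegler estimates give $\limsup_{M\to\infty}\|\lambda_W-1\|_{U^s[M]}\to0$ as $W\to\infty$, that contribution is, in the limit $M\to\infty$, at most some $\varepsilon(W)$ with $\varepsilon(W)\to0$. Choosing $W$ with $\varepsilon(W)<c_W$ gives the required positivity, and hence the theorem.

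The principal obstacle is the weighted van der Corput/PET estimate of Step 3: one must run Bergelson--Leibman's polynomial exhaustion while carrying the external weight $a$, so that each van der Corput step simultaneously lowers the complexity of the polynomial family and transfers a difference operator onto $a$, and one must check that the induction terminates with control by a \emph{single} Gowers norm $U^s[M]$ with $s$ and $C$ independent of the system --- precisely this uniformity makes the number-theoretic input usable. A subtler point, needed to justify the last step, is that the Bergelson--Leibman constant $c_W$ for the dilated family $k\mapsto\vf_i(Wk)$ should not decay as $W\to\infty$; this is arranged either through a dilation-uniform form of the polynomial multidimensional Szemer\'edi theorem, or by refining the decomposition of $\Lambda$ so that the mass of its structured part lies on progressions to which Bergelson--Leibman applies with a $W$-independent lower bound.
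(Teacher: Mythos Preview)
Your outline matches the paper's proof: reduce via Furstenberg correspondence, split off the weight $\lambda_W-1$, control its contribution by a Gowers norm through a PET/van der Corput iteration (the paper's Lemma~\ref{L:key} and Proposition~\ref{P:key}), apply Green--Tao--Ziegler (Theorem~\ref{T:GTZ}) to make that small, and use a $W$-uniform polynomial Szemer\'edi lower bound for the unweighted part.

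Two technical points deserve attention. First, your PET estimate is stated for weights with $\|a\|_\infty\le 1$, but $\lambda_W-1$ is unbounded, so as written the estimate does not apply. The paper's Lemma~\ref{L:key} assumes only $a(n)/n^c\to 0$ for every $c>0$, which the von Mangoldt weight satisfies and which is all the van der Corput iteration actually needs: the subpolynomial growth handles the diagonal terms $\frac{1}{N^2}\sum_n |a(n)|^2$ (and their iterates) arising at each step, and the final Gowers-norm bound is unaffected. Second, for the $W$-uniformity of $c_W$ that you correctly flag as the subtle point, the paper takes precisely your first suggested route: it invokes from~\cite{BHRF} a version of Bergelson--Leibman (Theorem~\ref{T:UnifPolSz1}) whose constant depends only on $\mu(A)$ and the polynomials $q_{i,j}$, \emph{not} on the transformations, and then absorbs the dilation $n\mapsto Wn$ by passing to new commuting transformations $S_i=T^{c_iW^i}$, giving a lower bound $c$ independent of $W$ (Corollary~\ref{C:UnifPolSz2}). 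With these two adjustments your argument is exactly the paper's.
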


In fact, our argument shows this intersection has positive
relative density in the shifted primes.

The first result in this direction was due to S\'ark\"ozy~\cite{S},
who used analytic number theory to show that the difference set
$E-E$ for a set   $E$ of positive upper Banach density contains a
shifted prime $p-1$ for some $p\in\PP$ (and similarly, as for all
the results stated  here, a shifted prime of the form $p+1$).
In~\cite{FHK}, relying on strong uniformity results of~\cite{GT}
related to the primes combined with Roth's theorem on arithmetic
progressions,
%% a bit of ergodic theory,
we took a first step towards a multiple version, showing that such
$E$ contains an arithmetic progression of length $3$ whose common
difference is a shifted prime. This was generalized in two ways.
First, Wooley and Ziegler~\cite{WZ} proved
Theorem~\ref{th:main-comb} for $\ell=1$, relying on a deep ergodic
structure theorem and milder number theoretic input than used
in~\cite{FHK}. More recently, Bergelson, Leibman, and
Ziegler~\cite{BLZ}, proved Theorem~\ref{th:main-comb} for linear
polynomials $\vf_1, \ldots, \vf_m$, by combining the ergodic results
on IP-recurrence of~\cite{FK} and the uniformity results related to
the primes of~\cite{GT}, \cite{GT2}, and~\cite{GTZ} (their proof
also gives the partition version of our main result in full
generality). Theorem~\ref{th:main-comb} generalizes the results
of~\cite{WZ} and~\cite{BLZ}, and
 is in the spirit of~\cite{FHK}, with the main ingredients being
 the number theoretic uniformity
results of~\cite{GT}, \cite{GT2}, and~\cite{GTZ} and a uniform
version of the polynomial Szemer\'edi theorem \cite{BL},
\cite{BHRF}.

By the Furstenberg Correspondence Principle (see Section~\ref{S:FCP}
below), Theorem~\ref{th:main-comb} is equivalent to an ergodic
version and this is the version that we prove.
%%To avoid cumbersome
%%notation, we omit parentheses and write expressions as
%%$(\prod_{i=1}^\ell T_i^{q_{i,1}(n)}A)$, instead of the more precise
%%notation $((\prod_{i=1}^\ell T_i^{q_{i,1}(n)})A)$
%%$\clubsuit\clubsuit$
\begin{theorem}
\label{th:main-ergodic}
Let $\ell\in \N$, $(X, \X, \mu)$ be a probability  space, and let
$T_1, \ldots, T_\ell\colon X\to X$ be  commuting
invertible measure preserving transformations.  Let $m\in \N$, $q_{i,j}\colon\Z\to\Z$
be polynomials with $q_{i,j}(0)=0$ for
$i=1, \ldots, \ell$ and $ j=1,\ldots, m$.  Then for any $A\in\X$ with
$\mu(A)> 0$, the set of integers $n$ such that
$$
 \mu\bigl(A\cap \big(\prod_{i=1}^\ell T_i^{q_{i,1}(n)}\big)A\cap
\ldots \cap \big(\prod_{i=1}^\ell T_i^{q_{i,m}(n)}\big)A \bigr)>0
$$
has nonempty intersection with $\PP-1$ and $\PP+1$.
\end{theorem}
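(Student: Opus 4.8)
Set
\[
a(n):=\mu\Bigl(A\cap\bigcap_{j=1}^m\big(\prod_{i=1}^\ell T_i^{q_{i,j}(n)}\big)A\Bigr)=\int 1_A\cdot\prod_{j=1}^m\big(\prod_{i=1}^\ell T_i^{q_{i,j}(n)}\big)1_A\,d\mu\ \ge\ 0
\]
and let $d:=\max_{i,j}\deg q_{i,j}$. The plan is to reduce the theorem to showing that
\[
\liminf_{N\to\infty}\frac1N\sum_{n=1}^N\Lambda(n+1)\,a(n)>0
\]
(and, for the $\PP+1$ part, the analogous statement with $\Lambda(n-1)$ in place of $\Lambda(n+1)$; since the two cases are handled identically, I treat only the first). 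This reduction suffices: as $\Lambda\ge 0$, $a(n)\in[0,1]$, and the prime powers $p^k$ with $k\ge 2$ contribute $o(N)$ to the sum, one has $\sum_{n\le N}\Lambda(n+1)a(n)=\sum_{p\le N+1}(\log p)\,a(p-1)+o(N)$, so a positive value of the $\liminf$ forces $a(p-1)>0$ for a set of primes $p$ of positive relative density in $\PP$, which is stronger than the assertion.

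Next I would fix a parameter $w$, set $W:=\prod_{p<w}p$, and, using $a\ge 0$, restrict the sum to $n\equiv 0\bmod W$:
\[
\frac1N\sum_{n=1}^N\Lambda(n+1)\,a(n)\ \ge\ \frac1N\sum_{\substack{1\le n\le N\\ W\mid n}}\Lambda(n+1)\,a(n)\ =\ \frac{WM}{N}\cdot\frac1{\phi(W)}\cdot\frac1M\sum_{k=1}^M\Lambda_{1,W}(k)\,a(Wk),
\]
where $M:=\lfloor N/W\rfloor$ and $\Lambda_{1,W}(k):=\tfrac{\phi(W)}{W}\Lambda(Wk+1)$ is the $W$-tricked von Mangoldt function on the residue class $1\bmod W$. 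The decisive point of restricting to $n\equiv 0\bmod W$ --- equivalently, of sampling only primes $p\equiv 1\bmod W$ --- is that then $n=Wk$, so the polynomials that appear are $k\mapsto q_{i,j}(Wk)$, which are again maps $\Z\to\Z$ vanishing at $0$ and of degree $\le d$; hence $a(Wk)=\mu\bigl(A\cap\bigcap_j(\prod_{i=1}^\ell T_i^{q_{i,j}(Wk)})A\bigr)$ is a genuine polynomial multiple recurrence average for the same transformations $T_1,\dots,T_\ell$, to which the polynomial Szemer\'edi theorem applies. (Keeping a general residue $r\bmod W$ would introduce the values $q_{i,j}(r)$ as constant terms and replace $A$ in the intersection by several distinct translates of it, for which positivity can fail --- this is why the congruence restriction is essential.)

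Write $\Lambda_{1,W}=1+\psi_W$. The contribution of the constant $1$ is $\frac1M\sum_{k\le M}a(Wk)$, which is exactly the average appearing in the multidimensional polynomial Szemer\'edi theorem; by its uniform version \cite{BL},\cite{BHRF}, which supplies a lower bound $c=c(\mu(A),\ell,m,d)>0$ depending only on the density and on $\ell$, $m$ and the degree bound $d$ --- and hence \emph{independent of $W$}, since $q_{i,j}(W\cdot)$ still has zero constant term and degree $\le d$ --- one has $\liminf_M\frac1M\sum_{k\le M}a(Wk)\ge c$. For the contribution of $\psi_W$, the Green--Tao--Ziegler results \cite{GT},\cite{GT2},\cite{GTZ} give, for each fixed $s$, that $\limsup_M\|\psi_W\|_{U^s[M]}\to 0$ as $w\to\infty$, where $\|\cdot\|_{U^s[M]}$ denotes the Gowers norm on $\{1,\dots,M\}$; combining this with a uniformity estimate for polynomial multiple ergodic averages --- to the effect that there is $s=s(\ell,m,d)$ with
\[
\limsup_M\Bigl\|\frac1M\sum_{k=1}^M\psi(k)\prod_{j=1}^m\big(\prod_{i=1}^\ell T_i^{q_{i,j}(Wk)}\big)f_j\Bigr\|_{L^2(\mu)}\ \ll_{\ell,m,d}\ \limsup_M\|\psi\|_{U^s[M]}
\]
for all $1$-bounded $f_1,\dots,f_m$ and all weights $\psi$ dominated by the standard pseudorandom majorant for $\Lambda_{1,W}$, uniformly in the system and in $W$, proved via Bergelson's PET induction and iterated van der Corput estimates --- one obtains $\limsup_M\bigl|\frac1M\sum_{k\le M}\psi_W(k)\,a(Wk)\bigr|=o_w(1)$. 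Since $WM/N\to 1$, this gives $\liminf_N\frac1N\sum_{n\le N}\Lambda(n+1)a(n)\ge\tfrac1{\phi(W)}\bigl(c-o_w(1)\bigr)$, which is positive once $w$ is fixed large enough (here it is crucial that $c$ does not depend on $w$). This proves the theorem.

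I expect the main obstacle to be this last uniformity estimate, together with the unboundedness of $\Lambda$. One must establish $U^s$-control of polynomial multiple ergodic averages in the genuinely multidimensional regime of several commuting transformations --- where Bergelson's PET induction is considerably more intricate than in the one-dimensional case --- with $s$ bounded solely in terms of $\ell$, $m$ and the degrees; and one must be able to apply this to $\psi_W$, which is \emph{not} $L^\infty$-bounded. The latter is handled by the now-standard device of dominating $\Lambda_{1,W}$ by the pseudorandom Green--Tao majorant \cite{GT} and carrying out a relative, transference-type generalized von Neumann argument, so that everything is reduced to the $U^s[M]$-smallness furnished by \cite{GT2},\cite{GTZ}. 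The remaining ingredients --- the reduction to the weighted averages, the $W$-trick bookkeeping, and the invocation of the uniform polynomial Szemer\'edi theorem --- are routine by comparison.
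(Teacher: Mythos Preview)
Your proposal follows essentially the same strategy as the paper: the $W$-trick with restriction to the residue class $1\bmod W$, the splitting $\Lambda_{1,W}=1+\psi_W$, the uniform polynomial Szemer\'edi theorem for the main term with a constant $c$ independent of $W$, and $U^s$-control of the error via PET induction combined with the Green--Tao--Ziegler estimate. The architecture is correct.

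Two points of difference are worth noting. First, and this is the main simplification you are missing: no pseudorandom majorant or transference is needed. The paper's Lemma~\ref{L:key} shows that iterated van der Corput in the Hilbert space $L^2(\mu)$, driven by PET induction, yields
\[
\Bigl\|\frac1N\sum_{n=1}^N a(n)\prod_{j=1}^m\Bigl(\prod_{i=1}^\ell T_i^{q_{i,j}(n)}\Bigr)f_j\Bigr\|_{L^2(\mu)}\ll_d\|a\cdot{\bf 1}_{[1,N]}\|_{U_d(\Z_{dN})}+o_N(1)
\]
for \emph{any} weight sequence with $a(n)/n^c\to 0$ for all $c>0$. The unboundedness of $a$ enters only through the diagonal terms $\frac1{N^2}\sum_n|a(n)|^2$ (and their iterates) produced at each van der Corput step, and these are $o_N(1)$ under the sub-polynomial growth hypothesis. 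Since $\Lambda'_{w,r}(n)-1\ll\log(Wn+r)$, this applies directly to $\psi_W$. So what you flag as ``the main obstacle'' is not one: the relative generalized von Neumann machinery is simply bypassed.

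Second, on the independence of $c$ from $W$: the uniform result of \cite{BHRF} gives a constant depending on $\mu(A)$ and the \emph{polynomials} (not just their degrees), but crucially \emph{not} on the transformations. The paper's Corollary~\ref{C:UnifPolSz2} exploits this by absorbing $W$ into the transformations---writing, e.g., $T^{q(Wn)}=\prod_{i=1}^d(T^{c_iW^i})^{n^i}$, so the relevant polynomials become the fixed monomials $n,n^2,\dots,n^d$ applied to new commuting maps $S_i=T^{c_iW^i}$. Your justification (``since $q_{i,j}(W\cdot)$ still has degree $\le d$'') reaches the right conclusion but does not quite name the mechanism.
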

We also prove mean convergence results for the
corresponding multiple ergodic averages over the primes,
conditional on the convergence
of the corresponding averages over the full set of natural numbers
 (in some cases these results are not known).
%%To keep notation to a minimum, we only state the result
%%for polynomials taking values in $\Z$, but the analogous statement
%%in $\Z^\ell$
%%$\clubsuit$
%%is proved in an identical way.
%%$\clubsuit\clubsuit$

\begin{theorem}
\label{th:convergence} Let $\ell, m, \in \N$, $(X, \X, \mu)$ be a
probability space,   $T_1, \ldots, T_\ell \colon X\to X$ be
commuting invertible measure preserving transformations, and
$f_1,\ldots, f_m\in L^\infty(\mu)$ be functions.
 For $i=1, \ldots, \ell$ and $ j=1,\ldots, m$,
 let $q_{i,j}\colon\Z\to\Z$
be polynomials. Suppose that the averages
\begin{equation}\label{E:averages1}
\frac{1}{\pi(N)}\sum_{p\in\PP\cap[1,N]} f_1\big((\prod_{i=1}^\ell
T_i^{q_{i,1}(an+b)})x\big) \cdot \ldots \cdot
f_m\big((\prod_{i=1}^\ell T_i^{q_{i,m}(an+b)})x\big),
\end{equation}
converge in $L^2(\mu)$ as $N\to \infty$ for all integers $a,b\geq
1$. Then the averages
\begin{equation}\label{E:averages2}
\frac{1}{\pi(N)}\sum_{p\in\PP\cap[1,N]} f_1 \big((\prod_{i=1}^\ell
T_i^{q_{i,1}(n)})x\big) \cdot \ldots \cdot f_m\big((\prod_{i=1}^\ell
T_i^{q_{i,m}(n)})x\big),
\end{equation}
where $\pi(N)$ denotes the number of primes up to $N$, also converge
in $L^2(\mu)$ as $N\to\infty$.
\end{theorem}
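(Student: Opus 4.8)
The plan is to show that the averages \eqref{E:averages2} form a Cauchy sequence in $L^2(\mu)$; completeness then yields convergence. Write $F(n):=f_1\big((\prod_{i=1}^\ell T_i^{q_{i,1}(n)})x\big)\cdots f_m\big((\prod_{i=1}^\ell T_i^{q_{i,m}(n)})x\big)$, a bounded function of $n$ with $\norm{F(n)}_\infty\le\prod_j\norm{f_j}_\infty$, so that \eqref{E:averages2} is $A_N:=\frac1{\pi(N)}\sum_{p\in\PP\cap[1,N]}F(p)$. A routine first step passes from a sum over primes to one weighted by the von Mangoldt function: by partial summation, the prime number theorem, and the negligible (size $O(\sqrt N\log N)$) contribution of higher prime powers, $A_N-\frac1N\sum_{n\le N}\Lambda(n)F(n)\to 0$ in $L^2(\mu)$, so it suffices to prove that $\frac1N\sum_{n\le N}\Lambda(n)F(n)$ is Cauchy.

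The main device is the $W$-trick. Fix a parameter $w$, put $W=\prod_{p\le w}p$, and split $[1,N]$ into residue classes $r\bmod W$; the classes with $\gcd(r,W)>1$ contribute $O(w/\log N)$ to the von Mangoldt average and are discarded, while for $\gcd(r,W)=1$ we substitute $n=Wk+r$. Setting $\Lambda'_{r,W}(k):=\frac{\phi(W)}{W}\Lambda(Wk+r)$, we obtain, up to an error tending to $0$ as $N\to\infty$ for each fixed $w$,
\[
\frac1N\sum_{n\le N}\Lambda(n)F(n)\ \approx\ \frac1{\phi(W)}\sum_{\gcd(r,W)=1}\ \frac1{\lfloor N/W\rfloor}\sum_{k\le N/W}\Lambda'_{r,W}(k)\,F(Wk+r).
\]
Now split $\Lambda'_{r,W}=1+(\Lambda'_{r,W}-1)$. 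The contribution of the $1$ is the \emph{main term} $\frac1{\phi(W)}\sum_{\gcd(r,W)=1}\frac1{\lfloor N/W\rfloor}\sum_{k\le N/W}F(Wk+r)$. For fixed $W$ and $r$ each $q_{i,j}(Wk+r)$ is again a polynomial in $k$ (no zero‑constant‑term restriction is imposed in this theorem, so the substituted family is admissible), and the inner average is precisely one of the averages \eqref{E:averages1} with $(a,b)=(W,r)$, which converges in $L^2(\mu)$ by hypothesis; being a finite convex combination of convergent sequences, the main term converges — hence is Cauchy — for each fixed $w$.

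It remains to bound, uniformly in $N$, the \emph{error term} $\frac1{\phi(W)}\sum_{\gcd(r,W)=1}\frac1{\lfloor N/W\rfloor}\sum_{k\le N/W}(\Lambda'_{r,W}(k)-1)F(Wk+r)$ in $L^2(\mu)$ by something that is small when $w$ is large; this is the crux, and is where the deep inputs enter. The first, from \cite{GT}, \cite{GT2}, \cite{GTZ}, is the Gowers uniformity of the modified von Mangoldt function: for each $s\in\N$ there is $\eta(w)\to0$ with $\norm{\Lambda'_{r,W}-1}_{U^s[\lfloor N/W\rfloor]}\le\eta(w)$ for all $w$, all $r$ coprime to $W$, and $N$ large; since $\Lambda'_{r,W}$ is itself unbounded we also use the accompanying Green--Tao decomposition $\Lambda'_{r,W}-1=g_N+h_N$ with $g_N$ bounded, $h_N$ dominated by a pseudorandom measure, and both of small $U^s$-norm. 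The second is a uniform form of the polynomial Szemer\'edi theorem \cite{BL}, \cite{BHRF}: there exist $s$ and $C$, depending only on $\ell$, $m$ and the degrees of the $q_{i,j}$, such that for \emph{any} commuting invertible measure preserving transformations, any $f_j$ with $\norm{f_j}_\infty\le1$, any polynomials $r_{i,j}$ of the prescribed degrees, and any weight $(u(k))$ with $|u(k)|\le1$ (or dominated by a pseudorandom measure),
\[
\Big\Vert\frac1M\sum_{k\le M}u(k)\prod_{j=1}^m f_j\big(\prod_{i=1}^\ell T_i^{r_{i,j}(k)}x\big)\Big\Vert_{L^2(\mu)}\le C\,\norm{u}_{U^s[M]}^{1/C}+o_{M\to\infty}(1).
\]
Applying this in each residue class to $u=\Lambda'_{r,W}-1$, split via the first input, bounds the error term by $C\eta(w)^{1/C}+o_{N\to\infty}(1)$. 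Finally, given $\varepsilon>0$ fix $w$ with $C\eta(w)^{1/C}<\varepsilon$; then for $N,M$ large $\norm{A_N-A_M}_{L^2(\mu)}$ is at most the two error bounds plus the Cauchy difference of the main term, i.e.\ $\le3\varepsilon+o(1)$, and letting $\varepsilon\to0$ finishes the proof.

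The main obstacle is the uniform polynomial Szemer\'edi estimate above. Its proof goes through a PET induction combined with repeated van der Corput (Cauchy--Schwarz) steps, which reduces the multiple polynomial average to linear averages and produces control by an iterated Host--Kra-type seminorm of $u$ along the transformations. The delicate points are to organize the degree-lowering so that the final Gowers exponent $s$ — and the constant $C$ — depend only on $\ell$, $m$ and the degrees and \emph{not} on the polynomial coefficients (which grow with $W$), and then to convert the resulting seminorm estimate into a genuine $U^s[M]$ bound on $u$ that still applies when $u$ is only dominated by a pseudorandom majorant rather than bounded. This coefficient-uniformity is precisely what allows $w\to\infty$ while keeping $s$ and $C$ fixed, and is the content we draw from \cite{BL} and \cite{BHRF}.
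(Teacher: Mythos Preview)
Your overall architecture is exactly the paper's: pass to $\Lambda'$-weighted averages, apply the $W$-trick, split $\Lambda'_{w,r}=1+(\Lambda'_{w,r}-1)$, handle the main term by the hypothesis \eqref{E:averages1}, and bound the error term by combining (i) a PET/van der Corput estimate controlling the weighted polynomial average by a Gowers norm of the weight, uniformly in the polynomial coefficients, with (ii) the Green--Tao--Ziegler uniformity of $\Lambda'_{w,r}-1$. The Cauchy argument you give is the same as the paper's.

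Two points deserve correction. First, the estimate you label ``a uniform form of the polynomial Szemer\'edi theorem \cite{BL}, \cite{BHRF}'' is not in those references: \cite{BHRF} gives a uniform \emph{recurrence} lower bound (positivity of averages of intersections), not a Gowers-norm upper bound for weighted multiple averages. What you actually need is the paper's Lemma~\ref{L:key}, which is proved here from scratch by PET induction and repeated van der Corput; your final paragraph correctly identifies this mechanism and the crucial coefficient-independence of $d$, so the content is right but the attribution is not. Second, the detour through a Green--Tao decomposition $\Lambda'_{w,r}-1=g_N+h_N$ with a pseudorandom majorant is unnecessary and makes your sketch harder than it needs to be. The van der Corput/PET argument goes through directly for weights $a(n)$ with $a(n)/n^c\to 0$ for every $c>0$ (so in particular for $\Lambda'_{w,r}-1$, which is $O(\log n)$): boundedness of $a$ is never used, only that the side terms $\tfrac{1}{N^2}\sum_n |a(n)|^2$ arising at each van der Corput step are $o_N(1)$. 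With that observation your ``delicate point'' about converting to a $U^s$ bound under a pseudorandom majorant disappears, and the error estimate reduces cleanly to $\norm{(\Lambda'_{w,r}-1)\cdot{\bf 1}_{[1,N]}}_{U_d(\Z_{dN})}+o_N(1)$, exactly as in Proposition~\ref{P:key}.
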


Convergence of~\eqref{E:averages2} when $\ell=m=1$ was proved by
Wierdl~\cite{W} (more generally he showed pointwise convergence, an
issue that we do not address here). When all the transformations are
equal and one restricts to linear polynomials, we proved convergence
of~\eqref{E:averages2}
 in~\cite{FHK}, but for $m\geq 3$ this was conditional upon the results
of~\cite{GT2} and~\cite{GTZ} that were subsequently proven.
In the case where all the transformations are equal,
convergence of~\eqref{E:averages2}
was proved by Wooley and Ziegler in~\cite{WZ}.
Combined with the convergence
results of~\cite{HK3} and~\cite{L}, Theorem~\ref{th:convergence}
recovers the convergence results of~\cite{WZ}.
Using the convergence results of~\cite{Tao}, we obtain the new result
of mean convergence for the linear averages
$$
\frac{1}{\pi(N)}\sum_{p\in\PP\cap[1,N]}
f_1(T_1^{p}x) \cdot \ldots \cdot
f_\ell(T_\ell^{p}x),
$$
and combined with the results of~\cite{CFH}, we
have mean convergence for other new cases, for example the averages
$$
\frac{1}{\pi(N)}\sum_{p\in\PP\cap[1,N]}
f_1(T_1^{p}x) \cdot f_2(T_2^{p^2}x) \cdot
\ldots \cdot
f_\ell(T_\ell^{p^\ell}x).
$$
Combining with the convergence results of ~\cite{A1} and~\cite{A2},
we have mean convergence of the averages
$$
\frac{1}{\pi(N)}\sum_{p\in\PP\cap[1,N]}
f_1(T_1^{p^2}x)\cdot f_2(T_1^{p^2}T_2^px).
$$

\subsection{Strategy of the proof}
 We prove Theorems~\ref{th:main-ergodic}
and~\ref{th:convergence} by reducing the problem to a deep result on
the uniformity of the modified  von Mangoldt function
(Theorem~\ref{T:GTZ} below). The main idea is to compare the
multiple ergodic averages  along the primes with the corresponding
ones along the natural numbers, and show that the difference between
the two converges to zero in mean. Some variation of this idea holds
and is given in Proposition~\ref{P:key}.  The proof of this follows
by successive applications of the van der Corput lemma and a
straightforward PET (polynomial exhaustion technique)
induction argument, reducing the problem to the
aforementioned uniformity result. Given the comparison result of
Proposition~\ref{P:key}, the proof of Theorem~\ref{th:convergence}
follows in a straightforward manner from known convergence results,
and the proof of Theorem~\ref{th:main-ergodic} follows similarly,
with the additional input of a uniform version of the polynomial
Szemer\'edi theorem.

\subsection{Further directions}
Combining the method of this paper with the multiple recurrence
result and methods of~\cite{Lei98}, one can show that
Theorem~\ref{th:main-ergodic} holds under the relaxed assumption
that the transformations $T_1,\ldots, T_\ell$ generate a nilpotent
group (and thus obtain further combinatorial implications, as
in~\cite{Lei98}). Likewise the obvious extension of
Theorem~\ref{th:convergence} to the nilpotent case holds. In both
cases,  the necessary new ingredient is an extension  of the
uniformity estimate of Lemma~\ref{L:key} to the case that the
transformations $T_1,\ldots, T_\ell$ generate a nilpotent group,
which can be proved using the PET induction scheme in~\cite{Lei98}.
We do not carry this out here.

A more challenging problem is the extensions of
Theorems~\ref{th:main-ergodic} and \ref{th:convergence} to sequences
involving fractional powers. For example,
one could hope to show that for any positive real numbers $a$ and $b$,
any $E\subset\Z$ with $d^*(E)>0$ contains patterns of the form
$m, m+[p^a], m+2[p^a]$, or  patterns of the form
$m, m+[p^a], m+[p^b]$ for some $m\in \N$ and $p\in \PP$.
If one is to use the methods of this paper,
one would need to prove an appropriate variant of Lemma~\ref{L:key},
a seemingly nontrivial result.

Lastly, we mention that for two or more transformations, even
the simplest pointwise variants of the mean convergence results we
have established remain open. For example, it is not known if
for a probability space  $(X,\X,\mu)$, measure preserving
transformation $T\colon X\to X$, and functions $f_1,f_2\in
L^\infty(\mu)$,  the averages $\frac{1}{\pi(N)}\sum_{p\in \PP\cap
[1,N]}f_1(T^px)\cdot f_2(T^{2p}x)$, or the averages
$\frac{1}{\pi(N)}\sum_{p\in \PP\cap [1,N]}f_1(T^px)\cdot
f_2(T^{p^2}x)$, converge pointwise as $N\to \infty$.  As a first
step  one could try to prove a pointwise variant of
Theorem~\ref{th:convergence} by using the method of this paper. The
missing ingredient is an appropriate quantitative variant of
Theorem~\ref{T:GTZ}.

\subsection{General conventions and notation}
We denote the positive integers by  $\N=\{1,2,\ldots\}$ and write
$\Z_N=\Z/N\Z$; when needed, the set $\Z_N$ is identified with
$\N\cap [1,N]$.  If $f$ is a measurable function on a measure space
$X$ with transformation $T\colon X\to X$, we write $Tf=f\circ T$. If
$S$ is a finite set and $a\colon S\to \C$, then we write $\E_{n\in
S}a(n)= \frac{1}{|S|}\sum_{n\in S} a(n)$.
%%Some of our addition and averages are taken in $\Z_N$ and some
%%in $\Z$.  In order to distinguish between the two senses, we use
%%the expectation notation $\E_{n\in\Z_N}$ for an average in
%%$\Z_N$ and $\frac{1}{N}\sum_{1\leq n \leq N}$ for an average  in
%%$\Z$.
We use the symbol $\ll$  when
some expression is  majorized by a constant multiple of some  other expression. If this constant
depends on  variables $k_1,\ldots, k_\ell$,
we write $\ll_{k_1,\ldots, k_\ell}$.  We use $o_N(1)$
to denote a quantity that converges to zero when $N\to \infty$ and
all other parameters are fixed.

\section{Background}
\subsection{Furstenberg correspondence principle}\label{S:FCP}
We state a modification of the  correspondence principle of Furstenberg  (the formulation given is similar to the one in~\cite{BL}):
\begin{Correspondence1}[\cite{Fu1}]
Let $\ell\in \N$ and  $\E\subset \Z^\ell$. There exist a probability
space $(X, \X, \mu)$, commuting  invertible measure preserving
transformations $T_1, \ldots, T_\ell\colon X\to X$,
 and set $A\in\X$ with $\mu(A)=d^*(E)$, such that
$$
   d^*(E \cap (E-\vec n_1)\cap\ldots\cap
   (E-\vec n_\ell))\geq
   \mu\bigl(A\cap (\prod_{i=1}^\ell T_i^{n_{i,1}}A)\cap
\ldots \cap (\prod_{i=1}^\ell T_i^{n_{i,m}}A)
\bigr)
$$
for all $m\in\N$ and
$\vec n_j=(n_{1,j},\ldots,n_{\ell,j}) \in\Z^\ell$ for $j=1,\ldots,m$.
\end{Correspondence1}
In particular, this correspondence shows that  Theorem~\ref{th:main-comb}
follows from Theorem~\ref{th:main-ergodic}.

\subsection{Averages along the primes and weighted averages}

 Let $\Lambda\colon\N\to\R$
denote the von Mangoldt function, taking the value $\log p$ on
a prime $p$ and its powers and $0$ elsewhere, and let
$$
\Lambda'(n) = {\bf 1}_\PP(n)\cdot\Lambda(n)
$$ for $n\in\N$.
Throughout, the roles of $\Lambda$ and $\Lambda'$ are
interchangeable, and all the results can be proven for either
function (as the contribution from prime powers greater than $1$ is
negligible in our averages);  in this article the function
$\Lambda'$ appears more naturally and so we prove the results for
this version.

The following lemma is classical (for a proof, see for example~\cite{FHK}) and  allows us to relate averages over
the primes with weighted averages over the integers:
\begin{lemma}
\label{lemma:compare}
If $a\colon \N\to\C$ is bounded,  then
$$
\Bigl|\frac{1}{\pi(N)}\sum_{p\in\PP, p \leq N}a(p)-\frac{1}{N}\sum_{n=1}^{N} \Lambda'(n)\cdot
a(n)\Bigr| = o_N(1).
$$
\end{lemma}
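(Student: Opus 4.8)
The plan is to reduce the statement to the Prime Number Theorem by elementary manipulation. Set $C=\sup_n|a(n)|$, and note that since $\Lambda'(n)$ equals $\log p$ when $n=p$ is prime and vanishes otherwise,
$$
\frac{1}{N}\sum_{n=1}^N \Lambda'(n)\,a(n)=\frac{1}{N}\sum_{p\le N}(\log p)\,a(p),
$$
where here and below $p$ ranges over primes. Thus the quantity to estimate is
$$
D_N:=\frac{1}{\pi(N)}\sum_{p\le N}a(p)-\frac{1}{N}\sum_{p\le N}(\log p)\,a(p).
$$
I would first replace each weight $\log p$ by the constant $\log N$, and then replace the normalization $\frac{\log N}{N}$ by $\frac{1}{\pi(N)}$, controlling each passage by the Prime Number Theorem in the forms $\pi(N)\log N=N+o(N)$ and $\theta(N):=\sum_{p\le N}\log p=N+o(N)$.

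For the first step, since $0\le\log N-\log p$ for $p\le N$ and $|a(p)|\le C$,
$$
\Bigl|\frac1N\sum_{p\le N}(\log p)a(p)-\frac{\log N}{N}\sum_{p\le N}a(p)\Bigr|\le\frac{C}{N}\sum_{p\le N}(\log N-\log p)=\frac{C}{N}\bigl(\pi(N)\log N-\theta(N)\bigr),
$$
and writing $\pi(N)\log N-\theta(N)=(\pi(N)\log N-N)-(\theta(N)-N)$ shows this is $C\cdot o_N(1)$. For the second step, using $\bigl|\sum_{p\le N}a(p)\bigr|\le C\pi(N)$,
$$
\Bigl|\frac{\log N}{N}\sum_{p\le N}a(p)-\frac{1}{\pi(N)}\sum_{p\le N}a(p)\Bigr|=\Bigl|\frac{\log N}{N}-\frac{1}{\pi(N)}\Bigr|\cdot\Bigl|\sum_{p\le N}a(p)\Bigr|\le C\Bigl|\frac{\pi(N)\log N}{N}-1\Bigr|=o_N(1).
$$
Adding the two estimates gives $|D_N|=o_N(1)$.

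There is no real obstacle here: the argument is entirely elementary once the Prime Number Theorem is granted, and the only point requiring a little care is the bookkeeping of error terms in the first step, namely that the two quantities $\pi(N)\log N-N$ and $\theta(N)-N$ are both $o(N)$ by PNT, so their difference divided by $N$ still tends to $0$. An alternative that avoids the first step altogether is Abel summation: with $A(t)=\sum_{p\le t}a(p)$ one has $\frac1N\sum_{p\le N}(\log p)a(p)=\frac{A(N)\log N}{N}-\frac1N\int_2^N A(t)\,\frac{dt}{t}$, where the integral term is $O(1/\log N)$ since $|A(t)|\le C\pi(t)\ll t/\log t$, and one concludes as before. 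Finally, the whole computation is insensitive to replacing $\Lambda'$ by $\Lambda$, since the contribution of prime powers $p^k$ with $k\ge2$ to any of these averages is $O(\sqrt N\,\log N)$ and hence negligible after dividing by $N$ or $\pi(N)$.
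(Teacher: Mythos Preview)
Your argument is correct: both steps reduce cleanly to the Prime Number Theorem in the forms $\pi(N)\log N\sim N$ and $\theta(N)\sim N$, and the Abel-summation variant you sketch is equally valid. The paper itself does not supply a proof of this lemma, calling it classical and referring to~\cite{FHK}; your write-up is exactly the standard elementary derivation one would expect there, so there is nothing to compare.
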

In particular, the average in~\eqref{E:averages2} is asymptotically
equal to the weighted average over the natural numbers:
$$
\frac{1}{N}\sum_{n=1}^N \Lambda'(n)\cdot f_1 \big((\prod_{i=1}^\ell
T_i^{q_{i,1}(n)})x\big) \cdot \ldots \cdot f_m\big((\prod_{i=1}^\ell
T_i^{q_{i,m}(n)})x\big).
$$

\subsection{Gowers norms}
If $a\colon \Z_N\to \mathbb{C}$, we inductively define:
\begin{equation*}
\norm{a}_{U_1(\Z_N)}=\big|\E_{n\in \Z_N}a(n)\big|
\end{equation*}
and
\begin{equation*}
\norm{a}_{U_{d+1}(\Z_N)}=\Bigl(\E_{h\in \Z_N}\norm{a_h\cdot
\bar{a}}_{U_d(\Z_N)}^{2^d}
\Bigr)^{1/2^{d+1}} ,
\end{equation*}
where $a_h(n) = a(n+h)$. Gowers~\cite{Go} showed that for
$d\geq 2$ this defines a norm on $\Z_N$.

\subsection{Uniformity of the modified von Mangoldt function}
 For $w>2$ let
$$W = \prod_{p\in\PP, p< w} p$$ denote the product of the
primes bounded by $w$. For  $r\in \N$ let
$$\Lambda'_{w,  r}(n)=\frac{\phi(W)}{W}\cdot\Lambda'(Wn+r),
$$
where $\phi$ denotes the Euler function.
%%By
%%$\Lambda_{w, r, N}$ we denote the restriction of the function $\Lambda_{w,  r}$
%%to  $\Z/N\Z\to \C$, where we identify the interval $[1,N]$ with $\Z/N\Z$
%%in the natural way.

The next result is key for our study.  It was obtained in~\cite{GT}
(Theorem 7.2), conditional upon results on the
M\"obius function later obtained in~\cite{GT2}
(Theorem 1.1) and the inverse conjecture for the Gowers norms
(recently proved in~\cite{GTZ}):
\begin{theorem}[Green and Tao (\cite{GT}, \cite{GT2}),
Green, Tao, and Ziegler~\cite{GTZ}]\label{T:GTZ}
With the previous notation, for every $d\in \N$,
 the maximum,  taken over those $r$ between $1$ and $W$  satisfying $(r,W)=1$, of
$$
\norm{(\Lambda'_{w, r}-1)\cdot {\bf 1}_{[1,N]}}_{U_d(\Z_{dN})}
$$
 converges to $0$  as $N\to \infty$ and then $w\to \infty$.
\end{theorem}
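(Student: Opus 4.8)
The plan is to deduce the statement from two deep external inputs — the inverse theorem for the Gowers $U_d$-norms of Green, Tao and Ziegler~\cite{GTZ}, and the asymptotic orthogonality of the modified von Mangoldt function with nilsequences (a consequence of the M\"obius--nilsequence estimates of~\cite{GT2} together with the major-arc/circle-method analysis of~\cite{GT}) — and to interpolate between them by a transference and contradiction argument. Suppose the conclusion fails: then there are $d\in\N$, $\delta>0$, a slowly growing sequence $w=w(N)\to\infty$, and residues $r=r(N)$ with $(r,W)=1$ along which $\norm{(\Lambda'_{w,r}-1)\cdot{\bf 1}_{[1,N]}}_{U_d(\Z_{dN})}\ge\delta$ for infinitely many $N$; the aim is to contradict orthogonality with nilsequences. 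Working in $\Z_{dN}$ rather than $\Z_N$ is what makes the $U_d$-norm of an ${\bf 1}_{[1,N]}$-truncated function genuinely detect additive structure on the interval $[1,N]$ without wrap-around, so I would fix this ambient group throughout.

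The first step is to replace $\Lambda'_{w,r}$, which is unbounded (of size $\ll\log N$ on $[1,N]$) and so not directly amenable to the inverse theorem, by a bounded proxy. Using the Green--Tao pseudorandom majorant for the $W$-tricked von Mangoldt function and the associated relative Koopman--von Neumann (dense model) decomposition, write $\Lambda'_{w,r}=f_{\mathrm{str}}+f_{\mathrm{unif}}$ on $\Z_{dN}$, where $0\le f_{\mathrm{str}}\le C$ for an absolute constant $C$, where $\E_{n\in\Z_{dN}}\big(f_{\mathrm{str}}(n)-\Lambda'_{w,r}(n)\big){\bf 1}_{[1,N]}(n)=o_N(1)$, and where $\norm{f_{\mathrm{unif}}\cdot{\bf 1}_{[1,N]}}_{U_d(\Z_{dN})}=o_N(1)$. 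By the triangle inequality for $\norm{\cdot}_{U_d}$, along the bad subsequence we get $\norm{(f_{\mathrm{str}}-1)\cdot{\bf 1}_{[1,N]}}_{U_d(\Z_{dN})}\ge\delta/2$.

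Next I would feed the bounded function $f_{\mathrm{str}}-1$ into the inverse theorem~\cite{GTZ}: its $U_d$-norm being $\ge\delta/2$ forces a correlation $\big|\E_{n\in[1,N]}\big(f_{\mathrm{str}}(n)-1\big)\overline{\psi(n)}\big|\ge c(d,\delta)>0$ with a $(d-1)$-step nilsequence $\psi$ whose complexity (dimension, degree, Lipschitz norm, Mal'cev data) is bounded in terms of $d$ and $\delta$ only. Writing $f_{\mathrm{str}}-1=(\Lambda'_{w,r}-1)-f_{\mathrm{unif}}$, the contribution of $f_{\mathrm{unif}}$ is $o_N(1)$, because $\norm{f_{\mathrm{unif}}\cdot{\bf 1}_{[1,N]}}_{U_d}=o_N(1)$ and a bounded-complexity $(d-1)$-step nilsequence has bounded $U_d$-dual norm (the elementary direction of the inverse theorem, obtained from the Cauchy--Schwarz--Gowers inequality). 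Hence $\big|\E_{n\in[1,N]}\big(\Lambda'_{w,r}(n)-1\big)\overline{\psi(n)}\big|\ge c(d,\delta)/2$ along the subsequence, contradicting the assertion that $\Lambda'_{w,r}-1$ is asymptotically orthogonal to all nilsequences of bounded complexity, uniformly over $r$ with $(r,W)=1$, as $N\to\infty$ and then $w\to\infty$.

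The main obstacle is exactly this last ingredient. One reduces $\Lambda$ to the M\"obius function $\mu$ via a Vaughan-type (or Heath--Brown) identity, applies the M\"obius--nilsequence estimate of~\cite{GT2} — that $\E_{n\le N}\mu(n)\psi(n)=o(1)$ uniformly over bounded-complexity nilsequences — and, crucially, verifies that the normalization $\tfrac{\phi(W)}{W}\Lambda'(Wn+r)$ strips off the local bias coming from small primes, so that the ``main term'' is \emph{exactly} the constant $1$ and $\Lambda'_{w,r}-1$ carries no archimedean or residual local correlation to cancel. This case split — equidistribution on minor arcs through~\cite{GT2}, exact vanishing of the major-arc term through the $W$-trick — together with the bookkeeping needed to keep everything uniform over $r$ coprime to $W$ and to propagate the complexity bounds through the inverse theorem, is the technically heaviest part; by contrast, the inverse theorem~\cite{GTZ} and the transference machinery are invoked essentially as black boxes.
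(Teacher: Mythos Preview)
The paper does not prove Theorem~\ref{T:GTZ} at all: it is quoted as an external input, attributed to \cite{GT} (Theorem~7.2) together with \cite{GT2} and \cite{GTZ}, and the only argument given is the short paragraph following the statement explaining why the precise formulation used here (with the indicator ${\bf 1}_{[1,N]}$ and with the iterated limit in $N$ and then $w$) follows from what is literally stated in \cite{GT}. So there is no ``paper's own proof'' to compare your proposal to.

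Your sketch is, in broad outline, a reasonable summary of how the cited works combine to give the result: transference to pass from the unbounded $\Lambda'_{w,r}-1$ to a bounded model, the inverse theorem of \cite{GTZ} to extract a nilsequence obstruction from a large $U_d$-norm, and the M\"obius--nilsequence estimate of \cite{GT2} together with the $W$-trick to kill that obstruction. Two small caveats: the dense-model decomposition is usually phrased as $\Lambda'_{w,r}\cdot{\bf 1}_{[1,N]}=f_{\mathrm{bdd}}+f_{\mathrm{err}}$ with $\norm{f_{\mathrm{err}}}_{U_d}$ small (not with a mean condition on $f_{\mathrm{str}}-\Lambda'_{w,r}$ as you wrote), and your contradiction argument with a ``slowly growing sequence $w=w(N)$'' does not quite match the iterated limit in the statement --- one should instead fix $w$ large, derive the contradiction for all sufficiently large $N$, and then let $w\to\infty$. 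But none of this is relevant to the present paper, which treats the whole theorem as a black box.
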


%%In the notation of~\cite{GT}, the expression on the left hand side
%%of~\eqref{eq:last} is equal to
%%$o_{w;N\to+\infty}(1)+o_{w\to+\infty}(1)$, as $N$ and then $w$ tend
%%to infinity.
 Note that in~\cite{GT} (Theorem~7.2),
the result is stated with $w$ being a specific slowly growing
function of $N$, but the authors also note any sufficiently slowly
growing function of $N$ works too, and this implies our version.
Furthermore, in~\cite{GT} the theorems are stated without the
indicator function ${\bf 1}_{[1,N]}$, but the results of~\cite{GT},
\cite{GT2}, and~\cite{GTZ}, also imply this version.

%%{\bf Alternatively we can deduce from GTZ a uniformity result that involves averages in $\Z$
%%not $\Z_N$ (as we pretty much do in the proofs below), use this, and so avoid completely
%%the introduction of  $\Z_N$ notation in the sequel. So for real valued sequences $a\colon [N]\to \C$,
%%where we use the shorthand $[N]=[1,N]$, we define
%%$$
%%\norm{a}_{U_{d+1}([N])}^{2^{d+1}}=\frac{1}{N}\sum_{1\leq n_1,\ldots, n_d\leq N}
%%\Big|\frac{1}{N}\sum_{1\leq n\leq N-n_1-\ldots-n_d} a(n)\cdot a(n+n_1)\cdot \ldots\cdot a(n+n_1+\cdots n_d)\Big|^2.
%%$$}

\section{Comparing averages}

\subsection{PET (polynomial exhaustion technique) induction}
We describe the inductive scheme from~\cite{BL} and
follow the notation and implementation used in~\cite{CFH}.
Let $\ell,m\in\N$. Given $\ell$ ordered families of polynomials
$$
\Q_1=(q_{1,1},\ldots,q_{1,m}) ,\ldots,
\Q_\ell=(q_{\ell,1},\ldots,q_{\ell,m}),
$$
we define an \emph{ordered family $(\Q_1, \ldots, \Q_\ell)$
of $m$ polynomial $\ell$-tuples} by
$$
(\Q_1,\ldots,\Q_\ell)=\big((q_{1,1},\ldots,q_{\ell,1}),
\ldots,(q_{1,m},\ldots,q_{\ell,m})\big).
$$
This gives a concise way of recording
the polynomial iterates that appear in the average of
$$
 f_1(T_1^{q_{1,1}(n)}\cdots T_\ell^{q_{\ell,1}(n)}x) \cdot \ldots
\cdot f_m(T_1^{q_{1,m}(n)}\cdots T_\ell^{q_{\ell,m}(n)}x).
$$
The maximum of the degrees of the polynomials in the families
$\Q_1,\ldots,\Q_\ell$ is called \emph{the degree of  the family} $(\Q_1,\ldots,\Q_\ell)$.

 Fix an integer $s\geq 1$ and consider families of degree $\leq s$.
 For $i=1,\ldots, \ell$, define $\Q_i'$ to be the (possibly empty)
set given by:
$$
\Q_i'=\{ \text{nonconstant } q_{i,j} \in \Q_i\colon q_{i',j}
\text{ is constant for } i'<i\}.
$$

Two polynomials are said to be \emph{equivalent} if they have
the same degree and the same leading coefficient.
For $i=1,\ldots, \ell$ and $j=1,\ldots, s$, we
let $w_{i,j}$ denote the number of distinct non-equivalent
classes of polynomials of degree $j$ in the family  $\Q_i'$.

Define the \emph{(matrix) type} of the family $(\Q_1,\ldots, \Q_\ell)$ to be the matrix
$$
\begin{pmatrix}
w_{1,s}& \ldots &  w_{1,1}\\ w_{2,s}& \ldots& w_{2,1}\\ \vdots & \ldots &\vdots \\
w_{\ell,s}& \ldots& w_{\ell,1}
\end{pmatrix}.
$$

A matrix is said to be of {\em matrix type zero} if all
the $w_{i,j}$ are zero, and this happens exactly when all the polynomials
are constant.

We order the  types lexicographically:
given two $\ell\times s$ matrices $W=(w_{i,j})$
and $W'=(w'_{i,j})$, we say that $W$ is bigger than $W'$,
and write $W>W'$, if $w_{1,d}>w'_{1,d}$, or $w_{1,d}=w'_{1,d}$
and $w_{1,d-1}>w'_{1,d-1}$, $\ldots$,
or $w_{1,i}=w'_{1,i}$ for $i=1,\ldots,d$ and $w_{2,d}>w'_{2,d}$, and so on.
We have:
\begin{lemma}
\label{lem:decreasing2}
Every decreasing sequence of
 types of families of  $\ell$-tuples of polynomials is
 eventually stationary.
\end{lemma}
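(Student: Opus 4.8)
The plan is to reduce the statement to the well-foundedness of a lexicographic order on a \emph{finite} product of copies of $\N\cup\{0\}$. Once we fix $s\geq 1$ and restrict, as in the preceding paragraph, to families of degree $\leq s$, a type is nothing but an $\ell\times s$ matrix with entries in $\N\cup\{0\}$, and the ordering introduced above is exactly the lexicographic order obtained by reading the entries of such a matrix row by row, and within each row from the degree-$s$ column to the degree-$1$ column. So it suffices to prove that this lexicographic order on $(\N\cup\{0\})^{\ell s}$ admits no infinite strictly decreasing sequence; the assertion of the lemma then follows at once, since in any well-ordered set a non-increasing sequence attains its minimum at some index $k_0$ and is therefore constant for all $k\geq k_0$.

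To prove the well-foundedness, I would induct on the number $N$ of coordinates. For $N=1$ this is the usual well-ordering of $\N\cup\{0\}$. For the inductive step, suppose $(W^{(k)})_{k\geq 1}$ were an infinite strictly decreasing sequence in $(\N\cup\{0\})^{N}$ with the lexicographic order. The sequence of first coordinates $(w^{(k)}_1)_{k}$ is non-increasing — an increase at some step would force $W^{(k+1)}>W^{(k)}$, contrary to assumption — hence eventually constant, say equal to some $c$ for all $k\geq k_0$. For $k\geq k_0$ the strict inequality $W^{(k)}>W^{(k+1)}$ must then be witnessed in the remaining $N-1$ coordinates, so the truncated vectors form an infinite strictly decreasing sequence in $(\N\cup\{0\})^{N-1}$, contradicting the inductive hypothesis. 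Alternatively one may simply invoke the standard fact that a finite lexicographic product of well-ordered sets is well-ordered.

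I do not expect a genuine obstacle here; the only point requiring care is that the number of entries $\ell s$ is \emph{finite}, which is precisely what makes the lexicographic order well-founded even though each coordinate ranges over the infinite set $\N\cup\{0\}$ (an infinite lexicographic product of copies of $\N$ fails to be well-founded). I would close by remarking that this lemma is what guarantees termination of the PET induction carried out in the next subsections: each application of the van der Corput operation will strictly lower the type while not increasing the degree, so one stays within $\ell\times s$ matrices, and by the lemma this can occur only finitely often before the type becomes zero, i.e.\ before all the polynomial iterates are constant.
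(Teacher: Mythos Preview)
Your proof is correct. The paper does not supply a proof of this lemma at all, treating it as a standard fact about the lexicographic order on a finite product of copies of $\N\cup\{0\}$; your argument is exactly the expected justification and fills in that gap cleanly.
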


Thus applying some operation that reduces the type, after
finitely many repetitions, the procedure terminates.
Such an operation is described in the next subsection.

\subsection{The van der Corput operation}
Given a family $\Q=\big(q_1,\ldots,q_m\big)$,   $q\in \Z[t]$, and $h\in\N$, we define
the families $S_h\Q$ and $\Q-q$ as follows:
$$
 S_h\Q=(S_hq_1,\ldots,S_hq_m) \text{ and }
 \Q-q=\big(q_1-q,\ldots,q_m-q\big),
 $$
where $(S_hq)(n)=q(n+h)$.

Given a family of $\ell$-tuples of polynomials  $(\Q_1,\ldots,\Q_\ell)$,
an $\ell$-tuple
$(q_1,\dots,q_\ell)\in (\Q_1,\dots\Q_\ell)$, and $h\in\N$,
define the operation
 $$
(q_1,\ldots, q_\ell,h)\vdc(\Q_1,\ldots, \Q_\ell) =
(\tilde{Q}_{1,h},\ldots \tilde{Q}_{\ell,h}),
 $$
 where
 $$
  \tilde{Q}_{i,h}=(S_h\Q_i-q_i,\Q_i-q_i),
 $$
for $i=1,\ldots,\ell$  (note that this $\tilde{Q}_{i,h}$ is defined
to be the concatenation of two tuples of polynomials).

Starting with a family $(\Q_1,\ldots, \Q_\ell)$, we
successively apply appropriate van der Corput operations
to arrive at constant families of $\ell$-tuples of polynomials.
This is achieved using:
\begin{lemma}[Bergelson and Leibman~\cite{BL}]\label{L:reduceA'}
Let $(\Q_1,\ldots, \Q_\ell)$ be a family of $\ell$-tuples of polynomials with nonzero matrix type.
Then there exists %%an $\ell$-tuple of polynomials
$(q_1,\ldots,q_\ell)\in (\Q_1,\ldots,\Q_\ell)$
such that for every $h\in\N$,
the family  $(q_1,\ldots,q_\ell,h)\vdc(\Q_1,\ldots, \Q_\ell)$
has strictly smaller type than $(\Q_1,\ldots, \Q_\ell)$.
\end{lemma}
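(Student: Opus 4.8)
The plan is to prove Lemma~\ref{L:reduceA'} by a careful bookkeeping argument that identifies exactly which entry of the type matrix drops after a well-chosen van der Corput operation. First I would locate the ``active row'' of the family $(\Q_1,\ldots,\Q_\ell)$: since the type is nonzero, there is a smallest index $i_0$ such that the row $(w_{i_0,s},\ldots,w_{i_0,1})$ is nonzero, while for $i<i_0$ all polynomials $q_{i,j}$ are constant (so $\Q_i'$ is empty). Within row $i_0$, let $d$ be the largest degree for which $w_{i_0,d}>0$; by definition $\Q_{i_0}'$ contains a nonconstant polynomial of degree $d$. I would pick $(q_1,\ldots,q_\ell)\in(\Q_1,\ldots,\Q_\ell)$ to be the $\ell$-tuple whose $i_0$-th coordinate $q_{i_0}$ is such a polynomial of degree $d$ lying in $\Q_{i_0}'$, with maximal degree $d$ and, among these, belonging to a suitable non-equivalence class; the coordinates $q_i$ for $i<i_0$ are automatically constant.

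Next I would analyze the effect of the operation $(q_1,\ldots,q_\ell,h)\vdc(\Q_1,\ldots,\Q_\ell) = (\tilde Q_{1,h},\ldots,\tilde Q_{\ell,h})$ where $\tilde Q_{i,h}=(S_h\Q_i-q_i,\Q_i-q_i)$, row by row. For rows $i<i_0$: every polynomial in $\Q_i$ is constant and $q_i$ is constant, so all polynomials in $\tilde Q_{i,h}$ are constant, hence these rows stay zero; in particular the set of leading rows that are identically zero is not disturbed, and may only grow. For row $i_0$: subtracting $q_{i_0}$ (degree $d$, in the same leading-coefficient class as itself) kills the leading term of every polynomial in $\Q_{i_0}$ that is equivalent to $q_{i_0}$, strictly lowering their degrees; meanwhile $S_h q_{i_0}-q_{i_0}$ has degree $d-1$ (since the top-degree terms cancel under the shift), and forming $S_h\Q_{i_0}-q_{i_0}$ together with $\Q_{i_0}-q_{i_0}$ produces only polynomials of degree $\le d$, with the degree-$d$ classes now being strictly fewer — the class of $q_{i_0}$ has disappeared. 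One must check the standard fact that the number of degree-$d$ non-equivalence classes in $S_h\Q_{i_0}-q_{i_0}$ does not exceed that in $\Q_{i_0}$ (a shift does not create new leading coefficients at the top degree) and that no degree $>d$ appears. Combining: $w_{i_0,d}$ strictly decreases, while $w_{i_0,d'}$ for $d'>d$ remain $0$; by the lexicographic order, this already makes the new type strictly smaller regardless of what happens in rows below $i_0$ or at lower degrees in row $i_0$.

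The rows $i>i_0$ require only a crude bound: passing from $\Q_i$ to $\tilde Q_{i,h}$ (shift, subtract a polynomial, concatenate) can change the lower rows arbitrarily, but since the comparison of types is lexicographic and we have already won at position $(i_0,d)$, these changes are irrelevant. I would also double-check the degenerate possibilities: that $q_{i_0}$ being in $\Q_{i_0}'$ is compatible with the $\ell$-tuple constraint (the coordinates $q_i$, $i<i_0$, being constant is exactly the defining condition for membership in $\Q_{i_0}'$), and that subtracting a degree-$d$ polynomial from a constant polynomial in a row $i<i_0$ is harmless because $q_i$ there is constant, not $q_{i_0}$. This last point is automatic from the structure of the $\vdc$ operation, which subtracts the $i$-th coordinate $q_i$ from the $i$-th family $\Q_i$, not a common polynomial across rows.

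The main obstacle I anticipate is the precise claim that the van der Corput operation does not \emph{increase} the count $w_{i_0,d}$ of top-degree non-equivalence classes in the active row before the chosen class is removed — i.e.\ verifying that neither the shift $S_h$ nor the concatenation introduces new degree-$d$ leading-coefficient classes. The shift is harmless because $S_h q$ and $q$ have the same degree and leading coefficient, so $S_h\Q_{i_0}$ and $\Q_{i_0}$ have the same degree-$d$ classes; subtracting the fixed $q_{i_0}$ then removes precisely the class of $q_{i_0}$ from both halves while possibly merging others, so the concatenation has at most $w_{i_0,d}-1$ classes of degree $d$. Making this counting argument airtight, uniformly in $h\in\N$, is the crux; everything else is the routine lexicographic bookkeeping enabled by Lemma~\ref{lem:decreasing2}.
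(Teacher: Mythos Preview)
The paper does not prove this lemma; it is quoted from Bergelson--Leibman~\cite{BL} (with the implementation from~\cite{CFH}) and then used as a black box. So there is no proof in the paper to compare against, and the relevant question is simply whether your argument is correct.

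It is not, and the error is in the choice of $(q_1,\dots,q_\ell)$. You pick $q_{i_0}$ of \emph{maximal} degree $d$ in $\Q_{i_0}'$, and then assert that after subtracting $q_{i_0}$ the number of degree-$d$ equivalence classes in row $i_0$ strictly drops. This is false: every polynomial in $\Q_{i_0}$ of degree \emph{strictly less than} $d$ becomes, after subtracting $q_{i_0}$, a polynomial of degree exactly $d$ with leading coefficient equal to minus the leading coefficient of $q_{i_0}$. That is a brand new degree-$d$ class that did not exist before, and it can exactly compensate for the class of $q_{i_0}$ that you removed. Concretely, take $\ell=1$ and $\Q_1=(n,n^2)$, so the type is $(w_{1,2},w_{1,1})=(1,1)$. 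Your rule selects $q_1=n^2$; then
\[
\tilde Q_{1,h}=\bigl(n+h-n^2,\;2nh+h^2,\;n-n^2,\;0\bigr),
\]
which has one degree-$2$ class (leading coefficient $-1$, coming from $n-n^2$ and $n+h-n^2$) and one degree-$1$ class ($2nh+h^2$), hence type $(1,1)$ again---not strictly smaller. Your sentence ``forming $S_h\Q_{i_0}-q_{i_0}$ together with $\Q_{i_0}-q_{i_0}$ produces only polynomials of degree $\le d$, with the degree-$d$ classes now being strictly fewer'' is precisely where the argument breaks: you accounted for what happens to the old degree-$d$ polynomials but ignored the promotion of lower-degree ones.

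The classical PET choice goes the other way: one selects $q_{i_0}\in\Q_{i_0}'$ of \emph{minimal} degree $d_{\min}$ in the active row. Then subtracting $q_{i_0}$ leaves every polynomial of degree $>d_{\min}$ with unchanged degree and leading coefficient, so $w_{i_0,j}$ is preserved for $j>d_{\min}$; among the degree-$d_{\min}$ polynomials the class of $q_{i_0}$ disappears, and nothing of lower positive degree exists to be promoted. In the example above, choosing $q_1=n$ gives $\tilde Q_{1,h}=(h,\,(n+h)^2-n,\,0,\,n^2-n)$ with type $(1,0)<(1,1)$. Reworking your proof with this minimal-degree choice (and taking a little care with constant polynomials in $\Q_{i_0}$, which after subtraction land in the same new degree-$d_{\min}$ class with leading coefficient $-\,\text{lead}(q_{i_0})$) is what is needed.
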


While this lemma is usually stated to hold for sufficiently
large $h$, this is only in order to maintain extra properties
of the polynomial family (such as being essentially distinct), and
we do not need these properties here.  Thus we are able to phrase this in
the slightly stronger, and easier to use for our purposes, setting
of all $h\in\N$.

Assuming Lemma~\ref{L:reduceA'}, the proof of the next result is standard:
\begin{lemma}\label{L:k(d,m)'}
Let $(\Q_1,\ldots,\Q_\ell)$ be a family of  $m$ polynomial
$\ell$-tuples  with nonzero matrix type.  Suppose that we
successively apply the $(q_1,\ldots,q_\ell,h)\vdc$ operation for
appropriate choices of $q_1,\ldots,q_\ell\in \Z[t]$ and $h\in\N$, as
described in the previous lemma, each time obtaining a family of
$\ell$-tuples of polynomials with strictly smaller type. Then after
a finite number of  operations,  depending only  on $\ell$, $m$, and
the maximum degree of the polynomials (but not on the successive
choices of $h$), we obtain families of $\ell$-tuples of polynomials
of degree $0$.
\end{lemma}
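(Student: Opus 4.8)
The plan is to derive the statement from Lemma~\ref{L:reduceA'} together with two elementary observations on a single van der Corput operation and one short combinatorial fact about strictly decreasing sequences of integer matrices. Throughout, let $s$ denote the maximum degree of the polynomials in the initial family $(\Q_1,\ldots,\Q_\ell)$.

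I would first record that a van der Corput operation never raises the degree: every polynomial occurring in $(q_1,\ldots,q_\ell,h)\vdc(\Q_1,\ldots,\Q_\ell)$ has the form $S_hq_{i,j}-q_i$ or $q_{i,j}-q_i$ with $q_{i,j}\in\Q_i$, hence degree at most $\max(\deg q_{i,j},\deg q_i)\le s$; so every family produced along the procedure has degree $\le s$, and its type is an $\ell\times s$ matrix. Second, each $\tilde Q_{i,h}$ is by definition the concatenation of two tuples of the same length as $\Q_i$, so the number of polynomials in each coordinate doubles at every step; after $k$ steps each coordinate has $2^k m$ polynomials, and therefore every entry of the type of the $k$-th family is at most $2^k m$ --- a bound which, like everything below, is completely independent of the $h$'s that were used. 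By Lemma~\ref{L:reduceA'} the procedure can be continued as long as the current type is nonzero, and by hypothesis each step strictly decreases the type; so Lemma~\ref{lem:decreasing2} already shows that the procedure terminates, necessarily at a zero type, which by definition means all the polynomials are constant, i.e.\ the family has degree $0$. What remains is to bound the number of steps uniformly.

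By the above it suffices to prove the following: there is a function $F$ so that any strictly decreasing (for the lexicographic order) sequence $W^{(0)}>W^{(1)}>\cdots$ in $\N^n$ whose $k$-th term has all coordinates at most $B\cdot 2^k$ has length at most $F(n,B)$; the sequence of types arising in the procedure then has length at most $F(\ell s,m)$. I would prove this by induction on $n$. For $n=1$ the single coordinate starts at a value $\le B$ and strictly decreases, so the length is at most $B+1$. For $n>1$, the first coordinate of $W^{(k)}$ is non-increasing and at most $B$ to start with, so it takes at most $B+1$ distinct values; this splits the sequence into at most $B+1$ consecutive blocks on each of which the first coordinate is constant, and on such a block the sequence of the remaining $n-1$ coordinates is strictly lexicographically decreasing. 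A block beginning at step $k_0$ thus has length at most $F(n-1,\,B\cdot 2^{k_0})$ by the inductive hypothesis; since $k_0$ is bounded by the combined length of the earlier blocks, summing these estimates over the at most $B+1$ blocks yields a finite (if rapidly growing) bound $F(n,B)$ depending only on $n$ and $B$.

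The only place any care is needed is this last induction, and it is exactly where the subtlety flagged in the statement appears: because the number of polynomials --- hence the a priori bound on the entries of the type --- doubles at each step, one cannot fix a uniform bound on those entries in advance, and must instead use that the lexicographically leading coordinate of the type can never increase in order to cut the process into boundedly many stages before recursing. Since no bound above involves the $h$'s, the procedure reaches a degree-$0$ family after at most $F(\ell s,m)$ operations, a quantity depending only on $\ell$, $m$, and $s$, as claimed.
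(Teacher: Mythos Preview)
Your argument is correct. The paper itself gives no proof of this lemma, merely remarking that ``assuming Lemma~\ref{L:reduceA'}, the proof of the next result is standard,'' so there is nothing to compare your approach against at the level of detail.

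That said, your write-up supplies precisely the content that the word ``standard'' hides. The two preliminary observations (that a van der Corput step cannot raise the degree, and that the number of polynomials---hence an upper bound for each entry $w_{i,j}$ of the type matrix---doubles at each step independently of $h$) are exactly right, and together with Lemma~\ref{lem:decreasing2} they already give termination. The genuine issue, which you correctly isolate, is the uniformity in $h$: since the type after one step can actually depend on $h$ (different values of $h$ can produce different leading coefficients among the shifted polynomials, hence different numbers of equivalence classes), one cannot simply say the sequence of types is determined by the initial data. Your combinatorial lemma---bounding the length of any strictly lex-decreasing sequence in $\N_0^{\,\ell s}$ whose $k$-th term has entries at most $m\cdot 2^k$---handles this cleanly; the induction on the number of coordinates, splitting into at most $B+1$ blocks where the leading coordinate is constant and recursing on the tail with the inflated bound $B\cdot 2^{k_0}$, is valid and gives a bound depending only on $\ell$, $m$, and $s$. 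One cosmetic point: the paper sets $\N=\{1,2,\ldots\}$, while the type entries $w_{i,j}$ can be $0$, so your sequences really live in $(\Z_{\ge 0})^{\ell s}$; this does not affect the argument.
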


\subsection{Controlling averages}

We state a variation of a classical elementary estimate of van der
Corput.
\begin{lemma} \label{L:VDC2} Let $N\in \N$ and
$v(1),\ldots, v(N)$ be elements of a Hilbert space $\mathcal H$,
with inner product $\langle \cdot, \cdot\rangle$ and norm
$\norm{\cdot}$. Then
$$
\norm{\frac{1}{N}\sum_{n=1}^N v(n)}^2\ll \frac{1}{N^2} \sum_{n=1}^N\norm{v(n)}^2+
 \frac{1}{N}\sum_{h=1}^N
\Bigl\vert{\frac{1}{N}\sum_{n=1}^{N-h}\langle v(n+h), v(n)\rangle}\Bigr\vert.
$$
\end{lemma}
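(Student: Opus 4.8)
The plan is to prove this by the standard van der Corput trick: average the sum over small shifts and expand the square. First I would fix $N$ and, for notational convenience, set $v(n)=0$ for $n\notin\{1,\ldots,N\}$, so all sums below can be taken over a fixed index range. The key identity is that for any $h$ with $0\le h<N$,
$$
\frac{1}{N}\sum_{n=1}^{N} v(n) \;=\; \frac{1}{N}\sum_{n=1}^{N} v(n+h) \;+\; \frac{1}{N}\sum_{n=1}^{N}\bigl(v(n)-v(n+h)\bigr),
$$
and the last sum is supported on at most $2h$ values of $n$. Averaging the first identity over $h=0,1,\ldots,N-1$ and applying the triangle inequality (in $\H$) gives
$$
\norm{\frac{1}{N}\sum_{n=1}^{N} v(n)} \;\le\; \norm{\frac{1}{N}\sum_{h=0}^{N-1}\frac{1}{N}\sum_{n=1}^{N} v(n+h)} \;+\; \frac{1}{N}\sum_{h=0}^{N-1}\frac{2h}{N}\,\max_n\norm{v(n)}.
$$
The second term is bounded by $\max_n\norm{v(n)}$, but to match the stated right-hand side I would instead be more careful: bound $\norm{\sum_n (v(n)-v(n+h))}$ by $\sum_{n}\bigl(\norm{v(n)}+\norm{v(n+h)}\bigr)\cdot{\bf 1}_{\{n\le h\}\cup\{n>N-h\}}$ and use Cauchy--Schwarz in $n$, which produces a term of the size $\frac{1}{N^2}\sum_n\norm{v(n)}^2$ after squaring; this is exactly the first term on the right.

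For the main term, I would write $u=\frac{1}{N}\sum_{h=0}^{N-1}\frac{1}{N}\sum_{n=1}^{N}v(n+h)$ and compute $\norm{u}^2=\langle u,u\rangle$ by expanding both sums, obtaining an average of $\langle v(n+h),v(n'+h')\rangle$ over $n,n',h,h'$. Reindexing so that the inner double sum over $h$ for fixed difference becomes a sum over a single new difference variable, and using that $\langle v(a),v(b)\rangle=\overline{\langle v(b),v(a)\rangle}$ so the $h$ and $-h$ contributions combine, collapses this to something of the form
$$
\norm{u}^2 \;\ll\; \frac{1}{N^2}\sum_{n=1}^{N}\norm{v(n)}^2 \;+\; \frac{1}{N}\sum_{h=1}^{N}\Bigl|\frac{1}{N}\sum_{n=1}^{N-h}\langle v(n+h),v(n)\rangle\Bigr|,
$$
where the diagonal $h=0$ contributes the first term and the off-diagonal the second. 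Squaring the earlier triangle-inequality estimate (using $(a+b)^2\ll a^2+b^2$) and combining with this bound on $\norm{u}^2$ gives the claimed inequality.

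The routine bookkeeping — keeping track of the exact index ranges when shifting, and checking that the boundary terms from the shifts are genuinely of order $\frac{1}{N^2}\sum_n\norm{v(n)}^2$ rather than something larger — is where a little care is needed, but no step is deep; the only mild subtlety (and the step most likely to need a clean argument rather than a one-line estimate) is organizing the double sum over the two shift variables $h,h'$ so that it telescopes to a single-shift average with the correct normalization, which is the heart of the van der Corput inequality. Since the lemma is stated with $\ll$ and no explicit constant, I would not optimize constants anywhere.
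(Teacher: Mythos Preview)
Your shift-averaging strategy does not recover the stated bound: the boundary term is too large by a factor of $N$. With $v$ extended by zero, one computes exactly
\[
\frac{1}{N}\sum_{h=0}^{N-1}\frac{1}{N}\sum_{n=1}^{N}\bigl(v(n)-v(n+h)\bigr)=\frac{1}{N^2}\sum_{m=1}^{N}(N-m)\,v(m),
\]
and by Cauchy--Schwarz the norm of the right side is at most $\frac{1}{N^2}\bigl(\sum_m(N-m)^2\bigr)^{1/2}\bigl(\sum_m\norm{v(m)}^2\bigr)^{1/2}$, which is of order $N^{-1/2}\bigl(\sum_m\norm{v(m)}^2\bigr)^{1/2}$. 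After squaring this contributes a term of size $\frac{1}{N}\sum_m\norm{v(m)}^2$, not $\frac{1}{N^2}\sum_m\norm{v(m)}^2$ as you claim. Averaging over all $N$ shifts is simply too many: the boundary losses accumulate. This missing factor of $N$ is not harmless for the paper's application, where $\norm{v(n)}$ is of order $\log n$ and $\frac{1}{N}\sum_n\norm{v(n)}^2$ diverges.

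The fix is that no shift-averaging is needed at all here: the lemma follows by directly expanding the square. One has the exact identity
\[
\norm{\frac{1}{N}\sum_{n=1}^N v(n)}^2
=\frac{1}{N^2}\sum_{n,m=1}^N\langle v(n),v(m)\rangle
=\frac{1}{N^2}\sum_{n=1}^N\norm{v(n)}^2+\frac{2}{N^2}\,\mathrm{Re}\sum_{h=1}^{N-1}\sum_{n=1}^{N-h}\langle v(n+h),v(n)\rangle,
\]
and taking absolute values in the second sum gives the lemma with implicit constant $2$. The paper does not write out a proof but refers to the scalar case in Kuipers--Niederreiter and notes that the Hilbert-space version is identical; that argument is precisely this direct expansion. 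The averaging trick you sketched is the device behind the \emph{finer} van der Corput inequality with a free parameter $H\le N$, which is a different (and stronger) statement.
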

For the case $\mathcal H =\R$ and $\norm{\cdot }=|\cdot |$, the
proof is found, for example in~\cite{KN74}. The proof in the
general case is essentially identical.

Before stating the main lemma
(\ref{L:key})
used to control averages, we
give a simple case that illustrates the technique:
\begin{example*}
Let $a\colon \N\to \C$ be a sequence that satisfies  $a(n)/ n^{1/4}\to 0$.
Let $(X,\X,\mu)$ be a probability space, $T\colon X\to X$
be a measure preserving transformation,
and $f\in L^\infty(\mu)$ be a function bounded by $1$. Then we have that
\begin{equation}
\label{eq:baby} \norm{\frac{1}{N}\sum_{n=1}^N a(n)\cdot
T^{n^2}f}_{L^2(\mu)}\ll \norm{a \cdot {\bf
1}_{[1,N]}}_{U_3(\Z_{3N})} +o_{N}(1).
\end{equation}
To prove this, we apply van der Corput (Lemma~\ref{L:VDC2} for $v(n)=a(n)\cdot  T^{n^2}f$)
and the Cauchy-Schwarz Inequality and we have
\begin{multline*}
\norm{\frac{1}{N}\sum_{n=1}^N a(n)\cdot T^{n^2}f}_{L^2(\mu)}^2\ll\\
\frac{1}{N}\sum_{h_1=1}^N \norm{\frac{1}{N}\sum_{n=1}^{N-h_1} \bar{a}(n+h_1)\cdot a(n)
\cdot T^{2nh_1+h_1^2}f}_{L^2(\mu)}
+ \frac{1}{N^2}\sum_{n=1}^N|a(n)|^2
\end{multline*}
(note that $\norm f_{L^2(\mu)}\leq 1$).  By assumption, the second
term is $o_N(1)$ and we are left with estimating the first term. For
$h_1=1,\ldots,N$, rewriting the interior sum as
$$
\frac{1}{N}\sum_{n=1}^{N} {\bf 1}_{[1,N]}(n+h_1)\cdot \bar{a}(n+h_1)\cdot a(n)
\cdot T^{2nh_1+h_1^2}f,
$$
and applying van der Corput and Cauchy-Schwarz once more,
we have that
\begin{multline*}
\norm{\frac{1}{N}\sum_{n=1}^{N-h_1} \bar{a}(n+h_1)\cdot a(n)
\cdot T^{2nh_1+h_1^2}f}_{L^2(\mu)}^2\ll\\
\frac{1}{N}\sum_{h_2=1}^N \Big|\frac{1}{N}\sum_{n=1}^{ N-h_1-h_2} a(n)\cdot
\bar{a}(n+h_1)\cdot \bar{a}(n+h_2)
\cdot a(n+h_1+h_2)\Big| + \frac{1}{N^2}\sum_{n=1}^N|\bar{a}(n+h_1)
\cdot a(n)|^2.
\end{multline*}
Again, by assumption, the average over $h_1\in \{1,\ldots,N\}$  of the second term is $o_N(1)$.
By further applications of Cauchy-Schwarz, we have that
the eighth power of the $L^2(\mu)$-norm of the
original average is bounded by a constant multiple of
\begin{equation}\label{E:11}
 \frac{1}{N^2}\sum_{1\leq h_1,h_2\leq N} \Big|\frac{1}{N}\sum_{n=1}^{N-h_1-h_2} a(n)
\cdot \bar{a}(n+h_1)\cdot \bar{a}(n+h_2)
\cdot a(n+h_1+h_2)\Big|^2 +o_N(1).
 \end{equation}
On the other hand, letting
$a_N(n)=a(n)\cdot {\bf 1}_{[1,N]}(n)$, for $n=1,\ldots,3N$,
and thinking of $a_N$ as a function $\Z_{3N}\to \C$, we have that
$$
\norm{a_N}_{U_3(\Z_{3N})}^8= \E_{ h_1,h_2 \Z_{3N}} |\E_{n\in
\Z_{3N}} a_N(n) \cdot \bar{a}_N(n+h_1)\cdot \bar{a}_N(n+h_2) \cdot
a_N(n+h_1+h_2)|^2.
$$
(The sums $n+h_1$, $n+h_2$, and $n+h_1+h_2$ are taken modulo
$3N$, and we make
the somewhat less conventional identification of $\Z_{3N}$ with
$[1,\ldots, 3N]$.)
This is greater than or equal to (eliminating values with $N< h_1, h_2
\leq 3N$)
$$
\frac{1}{9N^2}
\sum_{ 1\leq h_1,h_2 \leq N} |\E_{n\in \Z_{3N}} a_N(n)
\cdot \bar{a}_N(n+h_1)\cdot \bar{a}_N(n+h_2)
\cdot a_N(n+h_1+h_2)|^2,
$$
where we maintain the same convention on sums. Since in this
expression we have $1\leq h_1,h_2\leq N$ and  $a_N(n)$ is zero for
$n\in \{N+1,\ldots, 3N\}$, we have that  all  $h_1,h_2,n$ that make
a nonzero contribution to this last average satisfy $1\leq
n+h_1+h_2\leq 3N$. In particular, there are no circular effects and
the last expression is equal to
\begin{multline*}
\frac{1}{9N^2}\sum_{1\leq h_1,h_2 \leq N} \Big|\frac{1}{3N} \sum_{n=1}^{3N}
 a_N(n)
\cdot \bar{a}_N(n+h_1)\cdot \bar{a}_N(n+h_2)
\cdot a_N(n+h_1+h_2)\Big|^2\\
=
\frac{1}{81N^2}\sum_{1\leq h_1,h_2 \leq N} \Big|\frac{1}{N}
\sum_{n=1}^{N-h_1-h_2}
a(n)\cdot \bar{a}(n+h_1)\cdot \bar{a}(n+h_2)
\cdot a(n+h_1+h_2)\Big|^2,
\end{multline*}
 where the sums $n+h_1$, $n+h_2$, and $n+h_1+h_2$ are taken in
$\N$, without reduction modulo $3N$.  But this expression is exactly
 $1/81$ of the average in
\eqref{E:11}. Combining these estimates, we have that the eighth
power of the $L^2(\mu)$-norm of the original averages is bounded by
a constant times $\norm{a_N}_{U_3(\Z_{3N})}^{8}$ plus an $o_N(1)$
term. Thus we have estimate~\eqref{eq:baby}.
 \end{example*}

We now turn to the general case:
\begin{lemma}\label{L:key}
Let $\ell,m\in \N$,  $(X,\X,\mu)$ be a probability space,
$T_1,\dots, T_\ell\colon X\to X$ be commuting invertible measure
preserving transformations, $f_1,\ldots, f_m \in L^\infty(\mu)$ be
functions bounded by $1$, and $q_{i,j}\colon \Z\to \Z$,
$i\in\{1,\ldots,\ell\}$, $j\in\{1,\ldots,m\}$,  be polynomials. Let
$a\colon \N\to \C$  be a sequence of complex numbers satisfying
$a(n)/ n^{c}\to 0$ for every $c>0$. Then there exists $d\in \N$,
depending only on the maximum degree of the polynomials $q_{i,j}$
and the integers $\ell$ and $m$, such that
$$
\norm{\frac{1}{N}\sum_{n=1}^N a(n)\cdot
(\prod_{i=1}^\ell T_i^{q_{i,1}(n)}) f_1
\cdot \ldots \cdot (\prod_{i=1}^\ell T_i^{q_{i,m}(n)})f_m}_{L^2(\mu)}\ll_{d}
\norm{a\cdot {\bf 1}_{[1,N]}}_{U_d(\Z_{dN})} +o_{N}(1).
$$
Furthermore,  the implicit constant is independent of the sequence
$(a(n))_{n\in\N}$, and the $o_{N}(1)$  term depends only the
integer $d$ and on the sequence $(a(n))_{n\in\N}$.
\end{lemma}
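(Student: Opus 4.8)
The plan is to run the computation from the example above in full generality, with the ad hoc manipulations there replaced by the polynomial-exhaustion machinery developed above. Put $\tilde a:=a\cdot{\bf 1}_{[1,N]}$, extended by zero to all of $\Z$, so that the average in the statement equals $\frac1N\sum_{n\in\Z}\tilde a(n)\cdot(\prod_{i=1}^\ell T_i^{q_{i,1}(n)})f_1\cdot\ldots\cdot(\prod_{i=1}^\ell T_i^{q_{i,m}(n)})f_m$; write $W^{(0)}(n)\in L^2(\mu)$ for the $n$-th summand, which (since $\tilde a$ is supported on $[1,N]$) is supported on $[1,N]$. Using $(x+y)^{1/2^d}\le x^{1/2^d}+y^{1/2^d}$, it suffices to produce $d=d(\ell,m,s)$, with $s:=\max_{i,j}\deg q_{i,j}$, such that
$$\norm{\frac1N\sum_n W^{(0)}(n)}_{L^2(\mu)}^{2^d}\ \ll_d\ \norm{\tilde a}_{U_d(\Z_{dN})}^{2^d}+o_N(1).$$

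The heart of the matter is a single van der Corput step. Suppose $W(n)=c(n)\prod_{j=1}^m(\prod_{i=1}^\ell T_i^{\rho_{i,j}(n)})f_j$ for some weight $c\colon\Z\to\C$ and a family $(\Q_1,\dots,\Q_\ell)$, $\Q_i=(\rho_{i,1},\dots,\rho_{i,m})$, of $\ell$-tuples of polynomials. Applying Lemma~\ref{L:VDC2} to $v(n)=W(n)$ bounds $\norm{\frac1N\sum_n W(n)}_{L^2(\mu)}^2$ by $\frac1{N^2}\sum_n|c(n)|^2$ plus $\frac1N\sum_{h=1}^N|\frac1N\sum_n\langle W(n+h),W(n)\rangle|$. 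In each inner product I apply the invertible measure preserving transformation $\prod_{i=1}^\ell T_i^{-\rho_{i,k}(n)}$ for an index $k$ furnished by Lemma~\ref{L:reduceA'}; commutativity makes this licit, and it rewrites $\langle W(n+h),W(n)\rangle$ as $c(n+h)\overline{c(n)}$ times the integral over $X$ of a product of $2m$ functions, each of the form $(\prod_i T_i^{\sigma_i(n)})g$ with $g\in\{f_1,\dots,f_m,\overline{f_1},\dots,\overline{f_m}\}$ and with the $\ell$-tuples $(\sigma_1,\dots,\sigma_\ell)$ making up $(\rho_{1,k},\dots,\rho_{\ell,k},h)\vdc(\Q_1,\dots,\Q_\ell)$. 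Setting $W'_h(n):=c(n+h)\overline{c(n)}$ times this product of $2m$ functions, one has $\frac1N\sum_n\langle W(n+h),W(n)\rangle=\langle\frac1N\sum_n W'_h(n),1\rangle$, whence (the measure being a probability measure) $|\frac1N\sum_n\langle W(n+h),W(n)\rangle|\le\norm{\frac1N\sum_n W'_h(n)}_{L^2(\mu)}$. Thus $W'_h$ has the same shape as $W$, with the new polynomial family, with $2m$ functions still bounded by $1$ (so $\norm{W'_h(n)}_{L^2(\mu)}\le|c(n+h)|\,|c(n)|$), and with weight $c'_h(n)=c(n+h)\overline{c(n)}$; a second Cauchy--Schwarz then replaces $\frac1N\sum_h\norm{\cdot}$ by $(\frac1N\sum_h\norm{\cdot}^2)^{1/2}$. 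By Lemma~\ref{L:reduceA'} the new family has strictly smaller type than the old one, and --- this is the point that makes the bookkeeping work --- the weight transformation $c\mapsto c'_h$ is the same whatever the choice of $k$.

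I would then iterate, starting from $W^{(0)}$ (weight $\tilde a$, original family). By Lemma~\ref{L:k(d,m)'}, after $k_0=k_0(\ell,m,s)$ applications the polynomial family has degree $0$; set $d:=k_0+1$. Since $c\mapsto c(\cdot+h)\overline{c(\cdot)}$ is precisely the operation in the recursive definition of the Gowers norms, unwinding shows that the weight $c^{(k_0)}_{\vec h}$ after $k_0$ steps (with accumulated shifts $\vec h=(h_1,\dots,h_{k_0})$) satisfies, for every $M\in\N$, the identity $\norm{\tilde a}_{U_{k_0+1}(\Z_M)}^{2^{k_0+1}}=\E_{h_1,\dots,h_{k_0}\in\Z_M}|\E_{n\in\Z_M}c^{(k_0)}_{\vec h}(n)|^2$ (with all shifts taken mod $M$). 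Chaining the $k_0$ estimates above, and using at the last stage that after $k_0$ steps the polynomials are constant, so the product of functions attached to $W^{(k_0)}_{\vec h}$ is independent of $n$ and bounded by $1$ in $L^2(\mu)$ (whence $\norm{\frac1N\sum_n W^{(k_0)}_{\vec h}(n)}_{L^2(\mu)}\le|\frac1N\sum_n c^{(k_0)}_{\vec h}(n)|$), I obtain
$$\norm{\frac1N\sum_n W^{(0)}(n)}_{L^2(\mu)}^{2^{k_0+1}}\ \ll_{k_0}\ \frac1{N^{k_0}}\sum_{h_1,\dots,h_{k_0}=1}^N\Big|\frac1N\sum_{n\in\Z}c^{(k_0)}_{\vec h}(n)\Big|^2+o_N(1).$$
Here the $o_N(1)$ collects the diagonal terms $\frac1{N^2}\sum_n|c^{(j)}_{\vec h}(n)|^2$ (and their $\vec h$-averages) generated along the way: each $c^{(j)}_{\vec h}(n)$ is a product of at most $2^{k_0}$ values of $\tilde a$ at arguments in $[1,(k_0+1)N]$, so the hypothesis $a(n)/n^c\to0$ for every $c>0$ gives $\sup_{1\le x\le(k_0+1)N}|a(x)|=o(N^c)$, and choosing $c<2^{-k_0}$ makes every such term $o_N(1)$, uniformly in $\vec h$. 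Finally, taking $M=(k_0+1)N$ in the identity above and discarding the terms with some $h_i>N$ --- legitimate since $\tilde a$ is supported on $[1,N]$ and each $h_i\le N$, so no wraparound occurs, exactly as in the example --- the right-hand sum is at most a constant $c(k_0)$ times $\norm{\tilde a}_{U_{k_0+1}(\Z_{(k_0+1)N})}^{2^{k_0+1}}$. Taking $2^d$-th roots with $d=k_0+1$ yields the lemma, with the implicit constant depending only on $d$ (hence on $\ell,m,s$) and the $o_N(1)$ depending only on $d$ and $(a(n))_{n\in\N}$, as required.

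I expect the main obstacle to be purely organizational: one must follow two objects in tandem through the induction --- the polynomial family, so that Lemmas~\ref{L:reduceA'} and~\ref{L:k(d,m)'} apply and force termination at degree $0$ after a number of steps bounded in terms of $\ell,m,s$ alone, and the weight, so that after those steps it is exactly the Gowers factor of $\tilde a$ while the accumulated diagonal errors genuinely vanish under the subpolynomial growth of $a$. The only step that is not mere bookkeeping is the passage from the honest averages over $[1,N]$ to the cyclic group $\Z_{dN}$, but this is carried out in the special case treated in the example, and the general case is verbatim the same.
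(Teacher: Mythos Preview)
Your proposal is correct and follows essentially the same route as the paper's proof: iterate van der Corput together with the PET induction (Lemmas~\ref{L:reduceA'} and~\ref{L:k(d,m)'}) to drive the polynomial family to degree~$0$ in a number of steps depending only on $\ell,m$ and the maximum degree, track the weight through the iteration so that it becomes the Gowers multiplicative derivative of $\tilde a$, control the accumulated diagonal terms via the subpolynomial growth hypothesis, and finally compare the resulting sum to $\norm{\tilde a}_{U_{d}(\Z_{dN})}$ by the ``no wraparound'' argument from the example. Your write-up is in places more explicit than the paper's (for instance, you spell out that the subtraction of $(\rho_{1,k},\ldots,\rho_{\ell,k})$ is effected by composing with the measure-preserving map $\prod_i T_i^{-\rho_{i,k}(n)}$, and you name the weight recursion $c\mapsto c(\cdot+h)\overline{c(\cdot)}$), but the strategy, the inductive structure, and the endgame are the same.
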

\begin{proof}
For $i=1,\ldots,\ell$, let $\Q_i=(q_{i,1},\ldots,q_{i,m})$.
If the matrix type of the family $(\Q_1,\ldots,\Q_\ell)$ is zero,
then  all the polynomials are constant, in which case the conclusion
holds trivially for $d=1$.  If the matrix type is nonzero, then
by Lemma~\ref{L:reduceA'} there exists
$(q_1,\ldots,q_\ell)\in (\Q_1,\ldots,\Q_\ell)$
such that for $h_1\in\N$,
the family  $(q_1,\ldots,q_\ell,h_1)\vdc(\Q_1,\ldots, \Q_\ell)$  has type
strictly smaller than that of $(\Q_1,\ldots, \Q_\ell)$.

As in the model example, using van der Corput and Cauchy-Schwarz, we have
that
\begin{equation}\label{E:first}
\norm{ \frac{1}{N}\sum_{n=1}^N a(n)\cdot (\prod_{i=1}^\ell T_i^{q_{i,1}(n)})
f_1\cdot \ldots \cdot (\prod_{i=1}^\ell T_i^{q_{i,m}(n)})
f_m}_{L^2(\mu)}^{2^{d+1}}
\end{equation}
is bounded by an $o_N(1)$ term plus  a constant multiple of
$$
\frac{1}{N}\sum_{h_1=1}^N
  \norm{\frac{1}{N}\sum_{n=1}^{N-h_1} \bar{a}(n+h_1) \cdot
 a(n)\cdot
(\prod_{i=1}^\ell T_i^{q_{h_1,i,1}(n)})g_1 \cdot \ldots \cdot
(\prod_{i=1}^\ell T_i^{q_{h_1,i,2m}(n)})g_{2m}}_{L^2(\mu)}^{2^d},
$$
where $(q_{h_1,1,j},\ldots q_{h_1,\ell,j})\in
(q_1,\ldots,q_\ell,h_1)\vdc(\Q_1,\ldots, \Q_\ell)$ for every $h_1\in
\N$ and  $j=1,\ldots, 2m$  and each function $g_k$ is equal to one
of the functions $f_j$.
%%Furthermore, our assumption gives  that
%% $$
%% \max_{1\leq h_1\leq N}|\bar{a}(n+h_1)\cdot a(n)|/ N^{c}\to 0
%% $$ for every $c>0$.
 If the new family of polynomials has zero matrix type, we stop.
If not, as in the model example, we continue to use
van der Corput and Cauchy-Schwarz to bound the average over $n$.
By Lemma~\ref{L:k(d,m)'}, after a finite number of steps, depending
only on the maximum degree of the polynomials $q_{i,j}$ and the
integers $\ell$ and $m$, we have families of polynomials with
zero matrix type. Assume that this takes $d$ steps.
We deduce that the expression~\eqref{E:first}
is bounded by a $o_N(1)$ term (using the
assumption that $a(n)/n^c\to 0$ for every $c>0$ to control
the lower order terms) plus  a constant multiple of
 $$
\frac{1}{N^d}\sum_{ 1\leq h_1,\ldots, h_d \leq N} \Big|\frac{1}{N} \sum_{n=1}^{N-h_1-\cdots-h_d}
 a(n)
\cdot \bar{a}(n+h_1)\cdot  \bar{a}(n+h_2)\cdot\ldots\cdot  a(n+h_1+\cdots+h_d)
\Big|^2.
$$
(Note that the last occurrence of $a$ in this
expression may actually be $\bar{a}$, depending on the parity of
$d$.)
As in the model example, we see that this last average is bounded by a constant
(equal to $d^d$) times
$$
 \norm{
a\cdot {\bf 1}_{[1,N]}}_{U_{d+1}(\Z_{(d+1)N})}^{2^{d+1}},
$$
completing the proof.
\end{proof}

\subsection{Comparing averages}
The key result needed to compare averages over the primes and
over the integers is  (recall that
$W = \prod_{p\in\PP, p< w} p$ denotes the product of the
primes bounded by $w$):

\begin{proposition}\label{P:key}
Let $\ell,m\in \N$,  $(X,\X,\mu)$ be a probability space, $T_1,\dots, T_\ell\colon X\to X$ be commuting invertible measure preserving transformations, $f_1,\ldots, f_m \in L^\infty(\mu)$ be functions, and
$q_{i,j}\colon \Z\to \Z$, $i\in\{1,\ldots,\ell\}$, $j\in\{1,\ldots,m\}$,  be polynomials.
Then the maximum,   taken over those $r$ between $1$ and $W$  satisfying $(r,W)=1$, of the $L^2(\mu)$-norm of
$$
\frac{1}{N}\sum_{n=1}^N(\Lambda'_{w,r}(n)-1) \cdot
(\prod_{i=1}^\ell T_i^{q_{i,1}(Wn+r)})
f_1
\cdot \ldots \cdot (\prod_{i=1}^\ell T_i^{q_{i,m}(Wn+r)})f_m
$$
converges to $0$
as $N\to \infty$ and then $w\to \infty$.
\end{proposition}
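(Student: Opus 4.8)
The plan is to deduce the proposition from Lemma~\ref{L:key}, applied with the weight $a(n)=\Lambda'_{w,r}(n)-1$, followed by an appeal to the uniformity estimate of Theorem~\ref{T:GTZ}. By multilinearity of the average (before taking the norm) in $f_1,\dots,f_m$, we may assume each $f_j$ is bounded by $1$. Fix $w>2$ and an integer $r$ with $1\le r\le W$ and $(r,W)=1$, and for $i=1,\dots,\ell$, $j=1,\dots,m$ set $\tilde q_{i,j}(n)=q_{i,j}(Wn+r)$. Each $\tilde q_{i,j}$ is an integer-valued polynomial whose degree equals that of $q_{i,j}$, since composition with the linear map $n\mapsto Wn+r$ preserves degree ($W\ge 1$); in particular the maximum degree of the family $\{\tilde q_{i,j}\}$ agrees with that of $\{q_{i,j}\}$ and does not depend on $w$ or $r$. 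Moreover, since $\Lambda'(n)\le\log n$, we have $|a(n)|\le\frac{\phi(W)}{W}\log(Wn+r)+1\le\log(Wn+r)+1$, so $a(n)/n^{c}\to 0$ for every $c>0$, and the hypotheses of Lemma~\ref{L:key} are satisfied.

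Lemma~\ref{L:key} then provides an integer $d$ that depends only on $\ell$, $m$, and the maximum degree of the $q_{i,j}$ — hence is independent of $w$ and $r$ — together with an implicit constant that is likewise independent of $w$ and $r$, since the lemma asserts it does not depend on the sequence $a$. For each admissible $r$ this gives that the $L^2(\mu)$-norm appearing in the proposition (for that $r$) is $\ll_d\norm{(\Lambda'_{w,r}-1)\cdot{\bf 1}_{[1,N]}}_{U_d(\Z_{dN})}+o_N(1)$, where the $o_N(1)$ term depends only on $d$ and on the sequence $\Lambda'_{w,r}-1$, and so in principle on $w$ and $r$. For a fixed $w$ there are only finitely many admissible residues $r$ (at most $W$ of them), and the maximum of finitely many sequences tending to $0$ still tends to $0$; taking the maximum over $r$ of the bound above therefore yields that $\max_{(r,W)=1}$ of the $L^2(\mu)$-norm in the proposition is $\ll_d\max_{(r,W)=1}\norm{(\Lambda'_{w,r}-1)\cdot{\bf 1}_{[1,N]}}_{U_d(\Z_{dN})}+o_N(1)$, where for each fixed $w$ this final error term tends to $0$ as $N\to\infty$.

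It now suffices to invoke Theorem~\ref{T:GTZ} with this value of $d$, which says precisely that the Gowers-norm term $\max_{(r,W)=1}\norm{(\Lambda'_{w,r}-1)\cdot{\bf 1}_{[1,N]}}_{U_d(\Z_{dN})}$ tends to $0$ as $N\to\infty$ and then $w\to\infty$. Taking $\limsup_{N\to\infty}$ for a fixed $w$ removes the $o_N(1)$ term, and then letting $w\to\infty$ shows that $\max_{(r,W)=1}$ of the $L^2(\mu)$-norm in the proposition converges to $0$ in the same iterated sense, which is exactly the conclusion. The argument is routine once Lemma~\ref{L:key} and Theorem~\ref{T:GTZ} are in hand; the only points needing care — and the closest thing to an obstacle here — are that the integer $d$ and the implicit constant furnished by Lemma~\ref{L:key} be uniform in $w$ and $r$ (which holds because they depend only on $\ell$, $m$, and the degree, and $n\mapsto Wn+r$ preserves degrees), and that the $o_N(1)$ error, which does depend on $w$ and $r$, be absorbed by passing to the maximum over the finitely many residues for each fixed $w$ before sending $N\to\infty$.
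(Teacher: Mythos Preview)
Your proof is correct and follows essentially the same route as the paper: apply Lemma~\ref{L:key} with $a(n)=\Lambda'_{w,r}(n)-1$ and the shifted polynomials $q_{i,j}(Wn+r)$, note that the resulting $d$ and implicit constant are independent of $w,r$, and conclude via Theorem~\ref{T:GTZ}. If anything, you are more careful than the paper in verifying the growth hypothesis on $a$, in explaining why $d$ is uniform (degree preservation under $n\mapsto Wn+r$), and in handling the $r$-dependence of the $o_N(1)$ term by taking the maximum over the finitely many admissible residues for fixed $w$.
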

\begin{proof}
We can assume that all functions are bounded by $1$.
We apply Lemma~\ref{L:key}
for $a_{w,r}(n)=\Lambda'_{w,r}(n)-1 $ for $w,r\in\N$,
and the family of polynomials $q_{i,j}(Wn+r)$. Let
$\Z_W^*=\{r\in [1,W]\colon (r,W)=1\}$.  We get that
there  exists $d\in \N$,  independent of $w$ and $r$, such that
\begin{multline*}
\max_{r\in \Z_W^*} \norm{\frac{1}{N}\sum_{n=1}^N (\Lambda'_{w,r}(n)-1)\cdot
(\prod_{i=1}^\ell T_i^{q_{i,1}(Wn+r)}) f_1
\cdot \ldots \cdot (\prod_{i=1}^\ell
T_i^{q_{i,m}(Wn+r)})f_m}_{L^2(\mu)}\ll_{d}\\
\max_{r\in \Z_W^*}\norm{(\Lambda'_{w,r}-1)\cdot {\bf 1}_{[1,N]}}_{U_d(\Z_{dN})} +o_N(1)
\end{multline*}
where the term $o_N(1)$ depends only on the integers $d$ and  $w$.
The result now follows from Theorem~\ref{T:GTZ}.
\end{proof}

\section{Proof of the main results}
\subsection{Proof of Theorem~\ref{th:main-ergodic}}
We use the following uniform multiple recurrence result,
proved in the same way as Theorem 3.2 is proved in~\cite{BHRF}:
\begin{theorem}
\label{T:UnifPolSz1}
Let $(X, \X, \mu)$ be a probability  space and  $T_1, \ldots, T_\ell\colon X\to X$ be  commuting
invertible measure preserving transformations.  Let $q_{i,j}\colon\Z\to\Z$
be polynomials with $q_{i,j}(0)=0$ for
$i=1, \ldots, \ell$ and $ j=1,\ldots, m$.
Then for any $A\in\X$ with
$\mu(A)> 0$, there exists a positive constant $c$,
depending only on   $\mu(A)$ and the polynomials $q_{i,j}$, such that
$$
\liminf_{N\to \infty} \frac{1}{N}\sum_{n=1}^N
\mu\bigl(A\cap (\prod_{i=1}^\ell T_i^{q_{i,1}(n)}A)\cap
\ldots \cap (\prod_{i=1}^\ell T_i^{q_{i,m}(n)}A)
\bigr)\geq c.
$$
\end{theorem}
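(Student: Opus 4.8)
The plan is to deduce Theorem~\ref{T:UnifPolSz1} from the (qualitative) polynomial multidimensional Szemer\'edi theorem of Bergelson and Leibman~\cite{BL} by the supersaturation argument used to prove Theorem~3.2 of~\cite{BHRF}. After reducing to ergodic systems by ergodic decomposition (with the density--dependent constant surviving, up to replacing $\mu(A)$ by $\mu(A)/2$), I would pass to the combinatorial side through the Furstenberg correspondence principle of Section~\ref{S:FCP} and its converse: given an ergodic system and a $\mu$--generic point $x$, the set $E=E_x:=\{\vec v\in\Z^\ell:T_1^{v_1}\cdots T_\ell^{v_\ell}x\in A\}$ has $d^*(E)=\mu(A)$ and carries the corresponding multiple--return statistics. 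Thus it suffices to show: for each $\delta>0$ and each family $\vf_1,\dots,\vf_m\colon\Z\to\Z^\ell$ with $\vf_j(0)=\vec 0$ there is $c=c(\delta,\vf_1,\dots,\vf_m)>0$ so that every $E\subset\Z^\ell$ with $d^*(E)\ge\delta$ satisfies $\liminf_{N\to\infty}\frac1N\sum_{n=1}^N d^*\bigl(E\cap(E-\vf_1(n))\cap\dots\cap(E-\vf_m(n))\bigr)\ge c$, and for this it is enough to prove the corresponding finitary bound on the boxes $[1,L]^\ell$.

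The core has two parts. First, a soft compactness step upgrades~\cite{BL} to a \emph{uniform} statement with a bounded return time: for every $\delta>0$ there are $N_0(\delta),L_0(\delta)$ so that any $E\subset[1,L]^\ell$ with $L\ge L_0(\delta)$ and $|E|\ge\delta L^\ell$ contains a configuration $x,x+\vf_1(n),\dots,x+\vf_m(n)$ with $1\le n\le N_0(\delta)$. If this failed, a sequence of counterexamples $E_k$ would give, after pushing the empirical measures of the $\mathbf 1_{E_k}$ (viewed as points of $\{0,1\}^{\Z^\ell}$) to a weak-$*$ limit, a shift--invariant $\mu$ on $\{0,1\}^{\Z^\ell}$ with $\mu(C)\ge\delta$ for the cylinder $C=\{\omega:\omega(\vec 0)=1\}$ and with $\mu\bigl(C\cap\tau^{-\vf_1(n)}C\cap\dots\cap\tau^{-\vf_m(n)}C\bigr)=0$ for every $n\ge1$ --- for a fixed $n$ only boundedly many coordinates intervene, so no matching of scales is needed --- contradicting~\cite{BL} applied to the shift system. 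Second, a Varnavides--type count: tiling a large box by translates of a box of side $R=R(\delta)$, chosen at least $L_0(\delta/2)$ and large enough to contain all configurations with $n\le N_0(\delta/2)$, a first--moment argument produces a positive proportion of sub--boxes of density $\ge\delta/2$, each contributing such a configuration, and bounding the multiplicity with which a single configuration is covered yields a lower bound, depending only on $\delta$ and the $\vf_j$, for $\sum_{n\le N_0(\delta/2)}d^*\bigl(E\cap(E-\vf_1(n))\cap\dots\bigr)$, hence for $\frac1{N_0}\sum_{n=1}^{N_0}d^*(\cdots)$.

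What remains --- and this is the delicate point, handled exactly as for Theorem~3.2 of~\cite{BHRF} --- is to pass from a lower bound for the average over the fixed range $[1,N_0]$ to the $\liminf$ over all ranges, since the supersaturation step alone (with sub--boxes of bounded size) only controls configurations of bounded index. For this one invokes that, for the recurrence problem at hand, the averages $\frac1N\sum_{n=1}^N\mu\bigl(A\cap(\prod_i T_i^{q_{i,1}(n)})A\cap\dots\bigr)$ converge, so the $\liminf$ is a genuine limit, and that this limit is governed by a factor of complexity bounded in terms of $\ell$, $m$ and the degrees of the $q_{i,j}$, on which the correlation sequence $n\mapsto\mu(A\cap\cdots)$ is a polynomial nilsequence taking the value $\mu(A)$ at $n=0$ (here the hypothesis $q_{i,j}(0)=0$ enters); equidistribution of polynomial orbits on nilmanifolds then forces the limit to be bounded below in terms of $\mu(A)$ and the bounded complexity alone. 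I expect the main obstacle to be precisely this last estimate --- extracting a bound that is uniform over the nilsystems (of bounded step but a priori unbounded dimension) that can arise --- which is the content of the argument borrowed from~\cite{BHRF}.
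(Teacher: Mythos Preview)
The paper does not give a proof of this theorem; it simply records that it ``is proved in the same way as Theorem~3.2 is proved in~\cite{BHRF}''. That argument is a pure compactness argument carried out directly on the space of invariant measures: one pushes any system $(X,\mu,T_1,\dots,T_\ell,A)$ forward to a shift--invariant probability $\nu$ on $\{0,1\}^{\Z^\ell}$ via $x\mapsto(\mathbf 1_A(T_1^{v_1}\cdots T_\ell^{v_\ell}x))_{\vec v}$, observes that the set $\mathcal M_\delta=\{\nu:\nu(C)\geq\delta\}$ is weak$^*$--compact, that each average $G_N(\nu)=\frac1N\sum_{n=1}^N\nu\bigl(C\cap\tau^{-\vf_1(n)}C\cap\dots\cap\tau^{-\vf_m(n)}C\bigr)$ is weak$^*$--continuous, and then derives the uniform lower bound from the pointwise positivity supplied by~\cite{BL}. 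There is no passage through finite boxes, no Varnavides count, and no structure theory.

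Your Steps~A and~B are in the right spirit (though the combinatorial detour is unnecessary once one works in $\mathcal M_\delta$), but your Step~C is a genuine gap. You claim the passage from a bound on a fixed range $[1,N_0]$ to the $\liminf$ is ``handled exactly as for Theorem~3.2 of~\cite{BHRF}'' and then propose to use convergence of the averages together with the assertion that ``this limit is governed by a factor of complexity bounded in terms of $\ell,m$ and the degrees'' and that the correlation sequence is a polynomial nilsequence. None of this is available here: the Host--Kra/Ziegler structure theorem and the Bergelson--Host--Kra/Leibman description of correlation sequences as nilsequences are results about a \emph{single} measure preserving transformation, and there is no analogue for genuinely commuting $T_1,\dots,T_\ell$ that would let you carry out the uniform--over--nilsystems estimate you describe. (You yourself identify this as ``the main obstacle''.) Moreover, \cite{BHRF} dates from~2000 and predates the entire nilsequence machinery, so whatever its Theorem~3.2 does, it certainly does not do this. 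Replace Step~C by the measure--theoretic compactness argument above and the difficulty you anticipate disappears.
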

It is important to note that
the constant $c$ does not depend on the transformations $T_1,\ldots, T_\ell$. This observation
%%Expanding the polynomials $q_{i,j}$ into a sum of monomials and
%%using Theorem~\ref{T:UnifPolSz1} for appropriate powers of the
%%transformations $T_i$ and appropriate monomials $q_{i,j}$,
enables us to prove a uniform multiple recurrence result
more suitable for our purposes (the uniformity in $W$ is crucial):
\begin{corollary}
\label{C:UnifPolSz2}
Let $(X, \X, \mu)$ be a probability  space and  $T_1, \ldots, T_\ell\colon X\to X$ be  commuting
invertible measure preserving transformations.  Let $q_{i,j}\colon\Z\to\Z$
be polynomials with $q_{i,j}(0)=0$ for
$i=1, \ldots, \ell$ and $ j=1,\ldots, m$.
Then for any $A\in\X$ with
$\mu(A)> 0$, there exists a positive constant $c$,
depending on $\mu(A)$ and the polynomials $q_{i,j}$, such that
for every $W\in \N$, we have
$$
\liminf_{N\to \infty} \frac{1}{N}\sum_{n=1}^N
\mu\bigl(A\cap (\prod_{i=1}^\ell T_i^{q_{i,1}(Wn)}A)\cap
\ldots \cap (\prod_{i=1}^\ell T_i^{q_{i,m}(Wn)}A)
\bigr)\geq c.
$$
\end{corollary}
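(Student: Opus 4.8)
The plan is to deduce the corollary from Theorem~\ref{T:UnifPolSz1} by absorbing the dilation $n\mapsto Wn$ into the transformations rather than into the polynomials, so that Theorem~\ref{T:UnifPolSz1} is applied to a family of polynomials that does not depend on $W$. Since the constant furnished by that theorem depends only on $\mu(A)$ and on the polynomials, and not on the transformations, this produces a constant $c$ that is uniform in $W$, which is exactly what the corollary asserts.

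Concretely, let $D$ be the maximum degree of the polynomials $q_{i,j}$ and, for $1\le k\le D$, set $e_k(n)=\binom{n}{k}$; then $e_1,\dots,e_D$ is a $\Z$-basis of the group of integer-valued polynomials of degree at most $D$ that vanish at $0$. For each $W\in\N$ and each pair $(i,j)$, the function $n\mapsto q_{i,j}(Wn)$ is integer-valued of degree at most $D$ and vanishes at $0$ (because $q_{i,j}(0)=0$), so we may write
$$
q_{i,j}(Wn)=\sum_{k=1}^{D} b^{(W)}_{i,j,k}\,e_k(n),\qquad b^{(W)}_{i,j,k}\in\Z .
$$
I then introduce the commuting invertible measure preserving transformations $S_{i,j,k}:=T_i^{\,b^{(W)}_{i,j,k}}$, indexed by $1\le i\le\ell$, $1\le j\le m$, $1\le k\le D$, and the polynomials $p_{(i,j,k),j'}$ defined to be $e_k$ when $j'=j$ and identically $0$ otherwise; all of these vanish at $0$, and all of them lie in the fixed finite set $\{0,e_1,\dots,e_D\}$, which depends only on $D$. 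A direct computation gives, for every tuple index $j'$,
$$
\prod_{(i,j,k)} S_{i,j,k}^{\,p_{(i,j,k),j'}(n)}=\prod_{i=1}^{\ell} T_i^{\,\sum_{k} b^{(W)}_{i,j',k}\,e_k(n)}=\prod_{i=1}^{\ell} T_i^{\,q_{i,j'}(Wn)} .
$$
Applying Theorem~\ref{T:UnifPolSz1} to $(X,\X,\mu)$, the set $A$, the transformations $S_{i,j,k}$ and the polynomials $p_{(i,j,k),j'}$ now yields the asserted lower bound, with a constant depending only on $\mu(A)$ and on the family $\{p_{(i,j,k),j'}\}$ — and the latter, being drawn from $\{0,e_1,\dots,e_D\}$, depends only on $D$, hence only on the original polynomials $q_{i,j}$, and not on $W$.

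I expect the only delicate point to be precisely this last one: one must feed Theorem~\ref{T:UnifPolSz1} a $W$-free family of polynomials, since the naive substitution $q_{i,j}(n)\rightsquigarrow q_{i,j}(Wn)$ would produce $W$-dependent polynomials and hence, a priori, a $W$-dependent constant. Routing all of the $W$-dependence into the transformations circumvents this, at the mild cost of the redundant triple-indexing above (padding with zero polynomials), which is forced because the list of transformations in Theorem~\ref{T:UnifPolSz1} may not depend on the tuple index. Everything else is routine bookkeeping.
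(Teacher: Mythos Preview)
Your proposal is correct and follows essentially the same strategy as the paper: absorb the $W$-dependence into new commuting transformations so that Theorem~\ref{T:UnifPolSz1} is applied with a $W$-free family of polynomials, whence the constant is uniform in $W$. The paper carries this out only for $\ell=m=1$ using the monomial expansion $q(Wn)=\sum_i c_iW^i n^i$ and $S_i=T^{c_iW^i}$, declaring the general case analogous; your version with the binomial basis and the triple-indexing is a legitimate way to write out that general case (and even accommodates integer-valued polynomials that are not in $\Z[t]$), but the underlying idea is identical.
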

\begin{proof}
We write the proof for $\ell,m=1$, as the general case follows in an
analogous manner.  Let $(X,\X,\mu)$ be a probability  space and let
$T\colon X\to X$ be an invertible measure preserving transformation.
Let $q(n)=c_{1}n+\cdots+c_{d}n^d$, where $c_1,\ldots,c_d\in \Z$ and
$d\in \N$. Given $A\in \X$ and  $W\in \N$, we have that
$$
\mu(A\cap T^{q(Wn)}A)=
\mu\bigl(A\cap (\prod_{i=1}^{d} S_i^{n^i}A)\bigr)
$$
where $S_{i}=T^{c_iW^i}$ for $i=1,\ldots,d$.
The result now follows from Theorem~\ref{T:UnifPolSz1}.
\end{proof}

%%\subsection{Proof of Theorem~\ref{th:main-ergodic}}
Combining  Proposition~\ref{P:key} and Corollary~\ref{C:UnifPolSz2},
 we have that for sufficiently large $w\in \N$,
$$
\liminf_{N\to\infty}\frac{1}{N}\sum_{n=1}^N\Lambda'_{w,1}(n)\cdot
\mu\bigl(A\cap (\prod_{i=1}^\ell T_i^{q_{i,1}(Wn)}A)\cap
\ldots \cap (\prod_{i=1}^\ell T_i^{q_{i,m}(Wn)}A)
\bigr)> 0.
$$
By Lemma~\ref{lemma:compare}, the conclusion of Theorem~\ref{th:main-ergodic}
is satisfied for a set of $n$ with positive relative
density in the shifted primes $\PP-1$.

A similar argument holds for the shifted primes $\PP+1$.

\subsection{Proof of Theorem~\ref{th:convergence}}
To complete the proof, we follow the method used in~\cite{FHK}.
By Lemma~\ref{lemma:compare},
it suffices to prove convergence in $L^2(\mu)$ for the corresponding weighted averages

$$
A(N) = \frac{1}{N}\sum_{n=1}^{N}\Lambda'(n)\cdot (\prod_{i=1}^\ell
T_i^{q_{i,1}(n)})f_1 \cdot \ldots \cdot (\prod_{i=1}^\ell
T_i^{q_{i,m}(n)})f_m.
$$
Equivalently, it suffices to show that the sequence of functions $(A(N))_{N\in \N}$
is Cauchy in $L^2(\mu)$.

Let $\varepsilon>0$. Fix $w,r\in \N$, and let
$$
B_{w,r}(N) = \frac{1}{N}\sum_{n=1}^{N} (\prod_{i=1}^\ell
T_i^{q_{i,1}(Wn+r)})f_1 \cdot \ldots \cdot (\prod_{i=1}^\ell
T_i^{q_{i,m}(Wn+r)})f_m.
$$
(As before, $W$ denotes the product of primes bounded by $w$.)
By Proposition~\ref{P:key},
we have that for some $w_{0}\in \N$ (and corresponding $W_0\in\N$), if $N$ is large enough, then
\begin{equation}\label{E:a1a}
\norm{A(W_{0}N)-\frac{1}{\phi(W_0)}\sum_{1\leq r \leq W_0,
(r,W_0) = 1}B_{w_{0},r}(N)}_{L^{2}(\mu)}\leq \varepsilon/6.
\end{equation}
By assumption, for $r=1,\ldots, W_0$,  the sequence $(B_{w_{0},r}(N))_{N\in\N}$ converges in $L^2(\mu)$. Therefore,
   if $M$ and $N$ are
sufficiently large, then for $r=1,\ldots, W_0$ we have
\begin{equation}\label{E:a1b}
\norm{B_{w_{0},r}(N)  - B_{w_{0},r}(M)}_{L^{2}(\mu)}\leq \varepsilon/6.
\end{equation}
  Combining  \eqref{E:a1a} and \eqref{E:a1b} we have that if $M$ and $N$
are sufficiently large, then
\begin{equation}\label{E:a2}
\norm{A(W_{0}N) - A(W_{0}M)}_{L^{2}(\mu)}\leq \varepsilon/2.
\end{equation}
Lastly, for $r=1,\ldots, W_0$,  we have
\begin{equation}\label{E:a3}
\lim_{N\to\infty}\norm{A(W_0N+r)-A(W_0N)}_{L^2(\mu)}=0.
\end{equation}
Combining  \eqref{E:a2} and  \eqref{E:a3}, it follows that if $M$ and $N$ are sufficiently large, then
$$
\norm{A(N)-A(M)}_{L^2(\mu)}\leq \varepsilon.
$$
Therefore, the sequence $(A(N))_{N\in\N}$ is Cauchy in $L^2(\mu)$,  completing the proof of Theorem~\ref{th:convergence}.


\begin{thebibliography}{99999}

\bibitem{A1}
T.~Austin. Pleasant extensions retaining algebraic structure, I.
Preprint. Available at arxiv:0905.0518.

\bibitem{A2}
T.~Austin.  Pleasant extensions retaining algebraic structure, II.
Preprint. Available at arxiv:0910.0907.


%%\bibitem{VB}
%%V.~Bergelson. Weakly mixing PET. {\em Erg. Th. \& Dyn. Sys.},
%%\textbf{7} (1987), 337--349.


\bibitem{BHRF}
V.~Bergelson, B.~Host, R.~McCutcheon, F.~Parreau. Aspects of
uniformity in recurrence. {\em Colloq. Math.}  \textbf{84/85}
(2000), no. 2, 549--576.

\bibitem{BL}  V.~Bergelson, A.~Leibman.
Polynomial extensions of van der Waerden's and
Szemer\'edi's theorems.  {\em J. Amer. Math. Soc.}, {\bf 9}
(1996), 725--753.

\bibitem{BLZ} V.~Bergelson, A.~Leibman,  T.~Ziegler.  The shifted
primes and the multidimensional Szemer\'edi and polynomial van der
Waerden Theorems. To appear {\em C. R. Math. Acad. Sci. Paris}.

\bibitem{CFH} C.~Chu, N.~Frantzikinakis,  B.~Host.
Ergodic averages of commuting transformations with distinct degree
polynomial iterates. {\em Proc. London Math. Soc.} (3)  \textbf{102}
(2011), 801--842.

\bibitem{FHK} N.~Frantzikinakis, B.~Host, B.~Kra.  Multiple
recurrence and convergence for sequences related to the prime numbers.
{\em  J. Reine Angew. Math.}  {\bf 611}  (2007), 131--144.


\bibitem{Fu1} H.~Furstenberg.
Ergodic behavior of diagonal measures and a theorem of Szemer\'edi
on arithmetic progressions. {\it J. Analyse Math.} \textbf{71}
(1977), 204--256.

\bibitem{FK} H.~Furstenberg,  Y.~Katznelson.  IP$_r$ sets,
Szemer\'edi's Theorem, and Ramsey Theory.  {\em Bull. Amer. Math.
Soc.} {\bf 14} (1986), 275--278.


\bibitem{Go} W.~Gowers. A new proof of Szemer\'edi's theorem.
{\em Geom. Funct. Anal.} {\bf 11} (2001), 465--588.

\bibitem{GT} B.~Green,  T.~Tao.  Linear equations in the primes.
{\em Annals. Math.} \textbf{171} (2010), 1753--1850.

\bibitem{GT2} B.~Green, T.~Tao.
The M\"obius function is strongly orthogonal to nilsequences. To
appear, {\em Annals. Math.}

\bibitem{GTZ} B.~Green, T.~Tao, T.~Ziegler.
An inverse theorem for the Gowers $U^{s+1}$-norm.  Preprint.
Available at  arXiv:1009.3998.

\bibitem{HK3} B.~Host,  B.~Kra.  Convergence of
polynomial ergodic averages.  {\em Isr. J. Math.} {\bf 149} (2005), 1--19.

\bibitem{KN74} L.~Kuipers, H.~Niederreiter.  Uniform distribution
  of sequences.  Pure and Applied Mathematics. {\em
    Wiley-Interscience}, New York-London-Sydney, (1974).

\bibitem{Lei98} A.~Leibman. Multiple recurrence theorem for measure preserving actions of a nilpotent group.
{\em Geom. Funct. Anal.}  \textbf{8} (1998), 853--931.

\bibitem{L} A.~Leibman.  Convergence of multiple ergodic
averages along polynomials of several variables.  {\em Isr. J. Math.}
{\bf 146} (2005), 303--315.

\bibitem{S}  A.~S\'ark\"ozy.  On difference sets of sequences of
integers, III.  {\em Acta Math. Acadm. Sci. Hungar.} {\bf 31} (1978),
355--386.

\bibitem{Tao} T.~Tao.
Norm convergence of multiple ergodic averages for commuting transformations.
{\em Erg. Th. \& Dyn. Sys.} {\bf 28} (2008), 657--688.

\bibitem{W} M.~Wierdl.  Pointwise ergodic theorem along the prime numbers.
{\em Israel J. Math.} {\bf 64} (1988), 315--336.

\bibitem{WZ} T.~Wooley, T.~Ziegler.  Multiple recurrence and
convergence along the primes. To appear, {\em Amer. J. of Math.}
\end{thebibliography}
\end{document}